\documentclass[11pt,a4paper,twoside,reqno]{amsart}
\usepackage[T1]{fontenc}
\usepackage[english]{babel}
\usepackage{amsmath,amssymb}
\usepackage{bm}
\usepackage{blindtext,varioref}
\usepackage{float}
\usepackage{graphicx}
\usepackage{comment}
\usepackage[left=1.5cm, right=1.5cm, top=2cm, bottom=2cm]{geometry}
\usepackage{float}
\usepackage{graphicx}
\usepackage{color}
\usepackage{epsfig}
\usepackage{caption}
\usepackage{subcaption}
\usepackage{enumerate}
\usepackage{booktabs}
\usepackage{array}
\usepackage{cite}
\usepackage{hyperref}
\usepackage{xfrac}
\usepackage{float}
\usepackage{afterpage}

\usepackage{stmaryrd}
\usepackage{tikz}
\usepackage{siunitx}
\usepackage{float}

\usepackage{algorithm}
\usepackage{algorithmic}
\usepackage{placeins}
\usepackage{amsmath,amssymb,mathrsfs,amsthm}

\theoremstyle{plain}
\newtheorem{theorem}{Theorem}
\newtheorem{lemma}[theorem]{Lemma}

\newtheorem{proposition}[theorem]{Proposition}
\theoremstyle{definition}
\newtheorem{definition}[theorem]{Definition}

\theoremstyle{remark}
\newtheorem{remark}[theorem]{Remark}
\newtheorem{example}[theorem]{Example}

\newcommand{\bmx}{\ensuremath{\bm{x}}}
\newcommand{\bmu}{\ensuremath{\bm{u}}}
\newcommand{\bmU}{\ensuremath{\bm{U}}}
\newcommand{\bmn}{\ensuremath{\bm{n}}}
\newcommand{\kfourier}{\ensuremath{\bm{\kappa}}}
\newcommand{\mudiff}{\ensuremath{\hat{\mu}}}
\newcommand{\normeuclide}[1]{\begin{vmatrix}{#1}\end{vmatrix}_2}
\definecolor{mygreen}{rgb}{0.1333, 0.5451, 0.1333}
\newcommand{\normLdeux}[1]{\left\lVert #1 \right\rVert_{L^2}}
\newcommand{\Ux}{\ensuremath{U_x}}
\newcommand{\kUx}{\ensuremath{\kappa_x U_x}}
\newcommand{\Rd}{\mathbb{R}^d}
\DeclareMathOperator{\Moperator}{M}

\title[A mathematical study  of navigation  in stratified waters]{Mathematical and numerical study of a model for navigation  in stratified waters}

\author[Z. Rammal]{Zeina Rammal$^\dag$}
\address{$^\dag$  Laboratoire de Math\'ematiques et Applications (UMR CNRS 7348), Universit\'e de Poitiers, CNRS, 86073 Poitiers, France}

\author[M. Brachet]{Matthieu Brachet$^\dag$}

\author[G. Rousseaux]{Germain Rousseaux$^\ddag$}
\address{$^\ddag$Team Curiosity, Institut Pprime (CNRS UPR 3346), CNRS, Universit\'e de  Poitiers, ISAE‐ENSMA, 86073 Poitiers, France}

\author[M. Pierre]{Morgan Pierre$^\dag$}

\keywords{Two-layer model, internal waves, dispersion relation, Rayleigh damping, Fourier transform, exponential integrator, critical speed.}

\subjclass[2010]{76B55, 35Q35, 65M70}
\begin{document}

\begin{abstract}
We derive a linear model of navigation in a two-layer fluid with a variable velocity of the ship. A spectral version of the model including a Rayleigh damping term is analyzed. We prove that the Cauchy problem has a unique solution if the velocity   and if the initial data are  sufficiently regular.  The case of a constant speed is thoroughly investigated and the importance of a critical speed which separates two types of regimes is pointed out. We propose a numerical scheme based on the discrete Fourier transform for the space discretization and on  an exponential integrator for the time discretization. We prove an error estimate for the exponential integrator. Numerical experiments in one and two space dimensions complete the theoretical results. 
\end{abstract}

\maketitle

\section{Introduction}
When a ship navigates in stratified waters, waves propagate at the interface between two layers of fluids with different densities, in addition to the surface wake. These so-called interval waves  may influence the speed of the ship by generating a wave resistance to its movement. This is  known as the dead water phenomenon, and it was observed by Nansen during his expedition of 1893-1896 to the North pole~\cite{nansen1898}. 

The first experiments on dead water were performed by Ekman in 1904~\cite{Ekman1904}. Since then, this phenomenon has drawn a lot of attention in the scientific community. Many authors used linear models to describe it, starting with Lamb~\cite{Lamb1916}. In~\cite{Hudimac1961}, Hudimac derived a formula for the wave resistance of thin ships in a two-layer fluid, by means of a Green function and Fourier transform (see also~\cite{Sretenskii1959}).    In~\cite{Motygin1997}, the thin ship assumption was removed by Motygin and Kuznetsov, for a two-dimensional body. The possibility of non-uniqueness  for this problem for exceptional values of  the speed and geometry of the body was considered recently in~\cite{motygin2023}.

Motivated by the experiments and models in Mercier et al.~\cite{mercier2011}, Duch\^ene analyzed mathematically some nonlinear models in~\cite{duchene2011}. More recently, nonlinear models and numerical simulations were used by Grue to understand more thoroughly the dead water effect   in the case  Nansen's ship (the Fram)~\cite{grue2015,grue2018}. 
Computational Fluid Dynamics (CFD) simulations were also carried out in~\cite{esmaeilpour2018,reid2024}. Nice photographs of internal wave patterns can be found in~\cite{hughes1978,watson1992}.

In Fourdrinoy et al.~\cite{fourdrinoy2020dual}, experiments pointed out two complementary regimes in the dead water effect. First,  a transient regime called Ekman's drag appears; it  is  characterized by oscillations in the speed of the ship and it is related to the initial acceleration and the lateral confinement. Second, an asymptotic regime called Nansen's drag takes place; it is   characterized by a stationary internal wake and a stationary drag. Numerical simulations based on a linear model showed a good agreement with the experiments. This approach was further investigated by Fourdrinoy in his PhD thesis made under the supervision of Rousseaux~\cite{Fourdrinoy2022}. 

\medskip

In this paper, we study a linear model of navigation in two layers of  fluid with  finite depths and different densities. The model is inspired by~\cite{fourdrinoy2020dual}, but instead of imposing of towing force, we assume that the (possibly variable) velocity of the ship is given, as in towing tank experiments which display the wave resistance as a function  of the ship speed. This simplifies the analyzis, but we believe that it is an important step to reach a mathematical understanding of the full model from~\cite{fourdrinoy2020dual}. The fluids are  inviscid and irrotational and the 2D or 3D ship is slender, i.e. its draft is small compared to its length and to its beam. We also add to the model an artificial damping term known as  Rayleigh's trick~\cite{Rayleigh1883}. This is especially useful to handle the steady state solution. 

Starting with the potential flow equations with linear boundary conditions, we apply a Fourier transform and we obtain a spectral formulation of the model.  The resulting linear problem in time involves an unbounded multiplication operator related to  the dispersion relation and a source term depending on the ship's speed (see equation~\eqref{eq:WaterWaves_viscous2}). The unknown of the problem involves the deformation of the interface of the two layers and a potential flow variable. The space variable for the interface is 1D or 2D. 

We show that the Cauchy problem is globally well-posed with or without viscosity. It has a mild solution  if the initial solution belongs to $L^2$ and a strong or classical solution if the initial data is more regular, with appropriate assumptions on the speed. In the case of a constant speed, we give an analytical formulation of the solution. A stationary solution to the problem is provided in the case with viscosity. We recover a critical speed (see~\eqref{eq:Uc}) which is well-known in this context~\cite{duchene2011,fourdrinoy2020dual}. Differences between the subcritical regime and the supercritical regime are pointed out in the 1D and 2D cases. 

We propose a numerical scheme based on the discrete Fourier transform for the space discretization and on an exponential integrator
for the time discretization~\cite{hochbruck1998exponential,hochbruck2010exponential}. The space discretization is natural in view of the spectral formulation of the model. We prove that the error estimate for the time discretization based on a rectangle method has order one.  Moreover, we show that the exponential integrator preserves exactly the dispersion and dissipation properties of the continuous problem. This is a great benefit of the exponential integrator for such wave models~\cite{brachet2022comparison}. Numerical experiments
in one and two space dimensions show the importance of the Rayleigh viscosity for the steady state solution. They also illustrate the subcritical and supercritical regimes for a constant or a variable velocity of the ship.

\medskip

The manuscript is organized as follows. Once the relevant notations have been introduced in Section~\ref{sec:notations}, we derive the model and its spectral formulation in Section~\ref{sec:model}. Then, in Section~\ref{sec:continuouspb}, we analyze mathematically the continuous problem. The space and time discretizations are  briefly presented in Section~\ref{sec:numan}, together with a numerical analysis of the exponential integrator. Extensive 1D and 2D numerical simulations are shown in the last section. In Appendix~\ref{app:exponentialintegrator}, we present an algorithm which finds an optimal regularization parameter for the computation of a steady state solution. 

\section{Notation}
\label{sec:notations}

In the following, $z \in \mathbb{R}$ corresponds to the coordinate in the vertical direction and $\bmx$ is a point of the horizontal plane. For a two-dimensional problem, we have $\bmx = x \in \mathbb{R}$, while for a three-dimensional problem, $\bmx = \begin{pmatrix} x & y \end{pmatrix}^{\top} \in \mathbb{R}^2$. To simplify notations, we denote $\Omega$ the horizontal space, so $\Omega = \mathbb{R}$ or $\Omega = \mathbb{R}^2$.

We will consider different operators: the Laplacian $\Delta$, the divergence $\nabla \cdot $ and the gradient $\nabla$. When these operators are subscripted with $\bmx$ (i.e. $\nabla_{\bmx}$ or $\Delta_{\bmx}$), it means they are considered in the horizontal plane.

Furthermore, we denote $\bmn_s$ as a unit vector orthogonal to the surface $z = s(\bmx)$ and oriented in the $z>0$ direction:
\begin{equation*}
    \bmn_s = \dfrac{1}{\sqrt{1+\normeuclide{ \nabla_{\bmx} s }^2}} \begin{pmatrix} - \nabla_{\bmx} s \\ 1 \end{pmatrix}.
\end{equation*}
For instance, the unit vector orthogonal to the horizontal plane is $\bm{e_z} = \begin{pmatrix} \bm{0} \\ 1 \end{pmatrix}$.

In this paper, we consider the Fourier transform in $\Omega$ defined, for any function $\xi \in L^2(\Omega, \mathbb{C})$ regular enough, by 
\begin{equation}
\label{eq:fourier}
    \hat{\xi}_{\kfourier} = \displaystyle\int_{\Omega} \xi(\bmx)\exp(-2\pi i \kfourier \cdot \bmx) d \bmx.
\end{equation}
where $\normeuclide{ \kfourier } = \left\lbrace \begin{array}{cl}
    |\kappa| & \text{ if } \Omega = \mathbb{R} \text{ and } \kfourier = \kappa \in \mathbb{R} \\
    \sqrt{\kappa_x^2 + \kappa_y^2} & \text{ if } \Omega = \mathbb{R}^2 \text{ and } \kfourier = \begin{pmatrix} \kappa_x & \kappa_y \end{pmatrix}^{\top} \in \mathbb{R}^2
\end{array} \right.$. \\
With this definition, we have the following equalities :
\begin{align*}
    \widehat{\Delta_{\bmx} \xi}_{\kfourier} & = - 4 \pi^2 \normeuclide{ \kfourier }^2 \hat{\xi}_{\kfourier} \\
    \widehat{\nabla_{\bmx} \xi}_{\kfourier} & = 2 \pi i \kfourier \hat{\xi}_{\kfourier} \\
    \widehat{\xi(\cdot - X)}_{\kfourier} & = e^{-2 \pi i \kfourier \cdot X} \hat{\xi}_{\kfourier} \quad \text{ for all } X \in \Omega.
\end{align*}

\section{Model}
\label{sec:model}

\subsection{Model derivation}

Consider a fluid composed of two layers. The top layer is composed of a fluid of density $\rho_1$ and thickness $h_1>0$, while the lower layer has a density $\rho_2$ and thickness $h_2>0$. Densities satisfy $0< \rho_1 < \rho_2$. 

At time $t \geq 0$, a ship, located at $X(t)$, moves with the velocity $\bmU(t) = X'(t)$. As it sails, it generates waves at the interface between the two layers as illustrated in Figure~\ref{fig:twofluidlayer-boat}. In this section, we are interested in modeling these waves represented by a function $\eta : (t,\bmx) \mapsto \eta(t,\bmx) \in \mathbb{R}$. To simplify, the bottom is assumed to be flat and  we assume no free surface variation.  In laboratory experiments,  this latter condition is satisfied if the speed is below 23 $\si{cm/s}$ in water, the so-called Landau speed due to surface tension (see, e.g.,~\cite{carusotto2013}). 
\begin{figure}[ht]
    \centering
    \begin{tikzpicture}[scale=.8]
        \draw [thick, color=black] (-6,0) -- (5,0);
        \fill[color=gray!20] (-6,0) -- (5,0) -- (5,-1) -- (-6,-1);
        \draw [dotted, color=black] (-7.5,0) -- (5,0);
        \draw[color=blue,thick] (-6,5) -- (-1.5,5);
        \draw [blue, thick, domain=-1.5:1.5,smooth] plot (\x, {5+0.5*((\x)^2-2.25)});
        \draw[color=blue,thick] (1.5,5) -- (5,5);
        \draw [->,>=latex,blue] (1.5,4.5) -- (3.5,4.5);
        \draw (2.5,4.5) node[below,blue]{$\bmU(t)$};
        \draw [dotted, blue] (-7.5,5) -- (5,5);
        \draw [dotted, blue] (0,-1) -- (0,5.8);
        \draw (0,6) node[above,blue]{$X(t)$};
        \draw (5,5) node[right,blue]{$s(t,\bmx)$};
        \draw [red, thick] (1,2.5) -- (5,2.5);
        \draw [red, thick, domain=-6:1,smooth,samples=100] plot (\x, {2.5+.6*sin(pi*\x r)*(1-\x)^2*exp(\x)});
        \draw (5,2.5) node[right,red]{$\eta(t,\bmx)$};
        \draw [dotted, red] (-7.5,2.5) -- (5,2.5);
        \draw (-4,1.2) node{$\rho_2$};
        \draw (-4,3.7) node{$\rho_1$};
        \draw [->,>=latex] (-7,-.5) -- (-7,5.5);
        \draw (-7.1,0) node[left] {$z=-h_2$};
        \draw (-7.1,2.5) node[left] {$z=0$};
        \draw (-7.1,5) node[left] {$z=h_1$};
        \draw [->,>=latex] (5,0) -- (5.5,0);
        \draw (5.5,0) node[right]{$\bmx$};
    \end{tikzpicture}
    \caption{Illustration of the dead water phenomenon. $s$ represents the surface on which a boat sails at velocity $\bmU(t)$ over stratified water. $X(t)$ is the boat position at time $t$ and $\eta$ corresponds to the interface between two layers of fluid.}
    \label{fig:twofluidlayer-boat}
\end{figure}
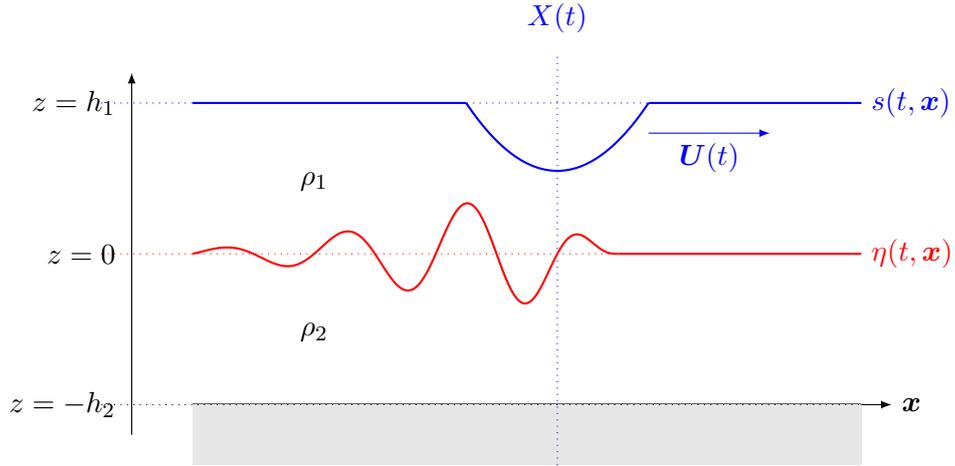

The flows are irrotational, then there exist a potential functions $\Phi_j (t,\bmx,z) \mapsto \Phi_j(t,\bmx,z)$ related to velocity fields by $\bmu_j = \nabla \Phi_j$ (where $\bmu_j : (t,\bmx,z) \mapsto \bmu_j (t,\bmx,z)$ is the velocity of the fluid in the layer $j \in \{ 1,2 \}$). More over, the flow is incompressible then $\nabla \cdot \bmu_j = 0$. From this two properties, we deduce
\begin{align}
    \Delta \Phi_1 = 0 & \qquad t \geq 0, \, \bmx \in \Omega \, \text{ and } \, \eta(t,\bmx) \leq z \leq s(t,\bmx) \label{eq:Poisson1}\\
    \Delta \Phi_2 = 0 & \qquad t \geq 0, \,  \bmx \in \Omega \, \text{ and } \, -h_2 \leq z \leq \eta(t,\bmx). \label{eq:Poisson2}
\end{align}
The model is closed with appropriate boundary conditions corresponding to impermeability and non-mixing of layers conditions (note that mixing was taken into account in~\cite{fourdrinoy2020dual} with Grue's dispersion relation). 
\begin{itemize}
    \item At the bottom $z = -h_2$, the impermeability condition is $\bmu_j \cdot \bm{k} = 0$ with $\bm{e_z}$. Then, we have 
    \begin{equation}
        \dfrac{\partial \Phi_2}{\partial z}(t,\bmx,-h_2) = 0 \qquad \text{ for all } \bmx \in \Omega, \, t \geq 0.
        \label{eq:Bottom}
    \end{equation}

    \item At the surface $z=s(t,\bmx)$, the vertical variation of $s$ is related to the fluid velocity by
    \begin{equation*}
        \dfrac{ds}{dt}(t,\bmx)=\bmu_1(t,\bmx,s(t,z)) \cdot \bmn_s \qquad t \geq 0, \, \bmx \in \Omega \, \text{ and } \, z = s(t,\bmx).
    \end{equation*}
    Assuming the surface is a rigid flat roof with a ship moving, the surface is written such that $s(t,\bmx)=h_1 + f(\bmx-X(t))$ where $f : \bmx \in \Omega \mapsto \mathbb{R}$ corresponds to the boat shape. The previous surface condition at $z = h_1 + f(t,\bmx-X(t))$ becomes
    \begin{align}
        \bmU(t) \cdot \nabla_{\bmx} f(\bmx - X(t)) & = \dfrac{- \nabla_{\bmx} \Phi_1(t,\bmx,z) \cdot \nabla_{\bmx} f(t,\bmx-X(t)) + \tfrac{\partial \Phi_1}{\partial z}(t,\bmx,z)}{\sqrt{1 + \normeuclide{ \nabla_{\bmx} f }^2}}
        \label{eq:Surface}
    \end{align}
    
    \item At the interface $z = \eta(t,\bmx)$, the non-mixing condition is
    \begin{equation*}
        \dfrac{\partial \eta}{\partial t} = \bmu_1 \cdot \bm{n}_{\eta} = \bmu_2 \cdot \bm{n}_{\eta}.
    \end{equation*}
    The interface being set up by $z = \eta(t,\bmx)$, we obtain the relation :
    \begin{equation}
        \nabla_{\bmx}(\Phi_2 - \Phi_1 ) \cdot \nabla_{\bmx} \eta = \dfrac{\partial}{\partial z}(\Phi_2 - \Phi_1 ).
        \label{eq:Interface1}
    \end{equation}
\end{itemize}
Let $g = 9.81 ~\si{m \cdot s^{-2}}$ the standard acceleration of gravity. The Bernoulli's theorem gives the relation
\begin{equation*}
    \rho_j \dfrac{\partial \Phi_j}{\partial t} + \dfrac{1}{2} \normeuclide{ \nabla \Phi_j }^2 + \rho_j gz = C_j
\end{equation*}
where $C_j$ is a constant. Assuming pressure continuity at the interface $z = \eta(t,\bmx)$, we get
\begin{equation}
    \rho_1 \dfrac{\partial \Phi_1}{\partial t} - \rho_2 \dfrac{\partial \Phi_2}{\partial t} + \dfrac{1}{2} \left( \normeuclide{ \nabla \Phi_1 }^2 - \normeuclide{ \nabla \Phi_2 }^2 \right) = g \eta (\rho_2 - \rho_1).
    \label{eq:Interface2}
\end{equation}
The complete non-linear model is given by~\eqref{eq:Poisson1}-\eqref{eq:Interface2}. However, in this paper we focus on the analysis of small variation of $\eta$ and $s$ around a state at rest. We also assume that the ship is slender, i.e. $\normeuclide{ \nabla_{\bmx} f }$ is small. For this reason, we consider the linearized version of the model :
\begin{align}
    \Delta \Phi_1 & = 0 & \qquad t \geq 0, \, \bmx \in \Omega \, \text{ and } \, 0 \leq z \leq h_1 \label{eq:Poisson1_lin}\\
    \Delta \Phi_2 & = 0 & \qquad t \geq 0, \, \bmx \in \Omega \, \text{ and } \, -h_2 \leq z \leq 0 \label{eq:Poisson2_lin}\\
    \dfrac{\partial \Phi_2}{\partial z}(t,\bmx,-h_2) & = 0 & \qquad t \geq 0, \, \bmx \in \Omega, \label{eq:bottom_lin}\\
    -\bmU(t) \cdot \nabla_{\bmx} f(\bmx - X(t)) & = \dfrac{\partial \Phi_1}{\partial z}(t,\bmx,h_1) & \qquad t \geq 0, \, \bmx \in \Omega, \label{eq:surface_lin}\\
    \dfrac{\partial \Phi_1}{\partial z}(t,\bmx,0) & = \dfrac{\partial \Phi_2}{\partial z}(t,\bmx,0)& \qquad t \geq 0, \, \bmx \in \Omega, \label{eq:interface0_lin}\\
    \dfrac{\partial \eta}{\partial t}(t,\bmx) & = \dfrac{\partial \Phi_2}{\partial z}(t,\bmx,0) & \qquad t \geq 0, \, \bmx \in \Omega, \label{eq:interface1_lin}\\
    \dfrac{\partial \varphi}{\partial t}(t,\bmx,0) & = g \eta(t,\bmx,0) (\rho_2 - \rho_1) & \qquad t \geq 0, \, \bmx \in \Omega, \label{eq:interface2_lin}\\
    \varphi(t,\bmx) & = \rho_1 \Phi_1(t,\bmx,0) - \rho_2 \Phi_2(t,\bmx,0) & \qquad t \geq 0, \, \bmx \in \Omega \label{eq:varphi}
\end{align}
The one-dimensional model (namely, with $\bmx=x\in\mathbb{R}$) corresponds to a ship of infinite beam in a channel of infinite width. 

\subsection{Spectral model}
The model~\eqref{eq:Poisson1_lin}-\eqref{eq:interface2_lin} is viewed as a dynamical system in the form
\begin{align}
    \dfrac{\partial \eta}{\partial t} & = T[\varphi(t,\cdot)] \\
    \dfrac{\partial \varphi}{\partial t}(t,\bmx,0) & = g \eta(t,\bmx,0) (\rho_2 - \rho_1)
\end{align}
where the operator $T : \varphi(t,\cdot) \mapsto \tfrac{\partial \Phi_2}{\partial z}(t,\bmx,0)$ and for a given $\varphi(t,\cdot)$, the functions $\Phi_1$ and $\Phi_2$ are defined by equations~\eqref{eq:Poisson1_lin}-\eqref{eq:interface0_lin} and~\eqref{eq:varphi} assuming appropriate boundary conditions at infinity. Nevertheless, the operator $T$ is difficult to analyze in physical space. We will consider it in Fourier space.

\begin{proposition}
    \label{prop:Fourier:T}
    We have the following Fourier relation :
    \begin{equation}
        \widehat{T[\varphi(t,\cdot)]}_{\kfourier} = - \dfrac{2 \pi \normeuclide{ \kfourier }}{T_{1,\kfourier} + T_{2,\kfourier}} \hat{\varphi}_{\kfourier} - \hat{g}_{\kfourier}
    \end{equation}
    where $T_{j,\kfourier} = \rho_j \coth(2 \pi \normeuclide{ \kfourier } h_j)$ ($j \in \{1, 2\}$) and 
    \begin{equation}
    \label{eq:defgk}    
    \hat{g}_{\kfourier} = \dfrac{2 \pi \rho_1 i \kfourier \cdot \bmU(t) e^{-2 \pi i \kfourier \cdot X(t)} \hat{f}_{\kfourier}}{\sinh(2 \pi \normeuclide{ \kfourier } h_1)(T_{1,\kfourier} + T_{2,\kfourier})}.
    \end{equation}
\end{proposition}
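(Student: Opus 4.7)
The plan is to apply the horizontal Fourier transform to the harmonic potentials, solve the resulting ODEs in $z$ explicitly using the boundary and interface conditions, and then read off the expression for $\widehat{T[\varphi(t,\cdot)]}_{\kfourier} = \widehat{\partial_z \Phi_2(t,\cdot,0)}_{\kfourier}$.

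First I would apply the Fourier transform in $\bmx$ to \eqref{eq:Poisson1_lin}--\eqref{eq:Poisson2_lin}, using the formula $\widehat{\Delta_{\bmx}\xi}_\kfourier = -4\pi^2\normeuclide{\kfourier}^2 \hat{\xi}_\kfourier$. This turns each Laplace equation into the second-order ODE $\partial_z^2 \hat{\Phi}_j - (2\pi\normeuclide{\kfourier})^2 \hat{\Phi}_j = 0$ in $z$. Writing $\alpha = 2\pi\normeuclide{\kfourier}$, a convenient general solution adapted to each boundary is
\begin{equation*}
    \hat{\Phi}_2(t,\kfourier,z) = A(t,\kfourier)\cosh(\alpha(z+h_2)), \qquad
    \hat{\Phi}_1(t,\kfourier,z) = B(t,\kfourier)\cosh(\alpha(z-h_1)) + C(t,\kfourier)\sinh(\alpha(z-h_1)).
\end{equation*}
The expression for $\hat{\Phi}_2$ already absorbs the bottom impermeability \eqref{eq:bottom_lin}, which is why I choose that particular form.

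Next, I would impose the surface condition \eqref{eq:surface_lin}. Taking the Fourier transform and using the translation identity $\widehat{f(\cdot-X(t))}_\kfourier = e^{-2\pi i\kfourier\cdot X(t)}\hat{f}_\kfourier$ together with $\widehat{\nabla_\bmx f}_\kfourier = 2\pi i\kfourier\hat{f}_\kfourier$, I get $\partial_z\hat{\Phi}_1(t,\kfourier,h_1) = -2\pi i\,\kfourier\cdot\bmU(t)\,e^{-2\pi i\kfourier\cdot X(t)}\hat{f}_\kfourier$, which forces
\begin{equation*}
    C(t,\kfourier) = -\frac{i\,\kfourier\cdot\bmU(t)\,e^{-2\pi i\kfourier\cdot X(t)}\hat{f}_\kfourier}{\normeuclide{\kfourier}}.
\end{equation*}
Then the kinematic interface condition \eqref{eq:interface0_lin}, evaluated at $z=0$, yields one linear relation between $A$ and $B$: namely $A\sinh(\alpha h_2)+B\sinh(\alpha h_1)=C\cosh(\alpha h_1)$.

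The key step is then to eliminate $B$ using this relation and plug into \eqref{eq:varphi}. After writing
$\hat{\varphi}_\kfourier = \rho_1[B\cosh(\alpha h_1)-C\sinh(\alpha h_1)]-\rho_2 A\cosh(\alpha h_2)$ and substituting $B$, the identity $\cosh^2(\alpha h_1)-\sinh^2(\alpha h_1)=1$ collapses the $C$-terms to $\rho_1 C/\sinh(\alpha h_1)$, while the $A$-terms organize themselves into $-A[\rho_1\sinh(\alpha h_2)\coth(\alpha h_1)+\rho_2\cosh(\alpha h_2)]$. Factoring $A\sinh(\alpha h_2)$ recognizes $T_{1,\kfourier}+T_{2,\kfourier}$, so that $\hat{\varphi}_\kfourier = \rho_1 C/\sinh(\alpha h_1) - (\alpha)^{-1}A\alpha\sinh(\alpha h_2)\,[T_{1,\kfourier}+T_{2,\kfourier}]$. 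Since by construction $\widehat{T[\varphi(t,\cdot)]}_\kfourier = \partial_z\hat{\Phi}_2(t,\kfourier,0) = A\alpha\sinh(\alpha h_2)$, solving for this quantity produces
\begin{equation*}
\widehat{T[\varphi(t,\cdot)]}_\kfourier = -\frac{2\pi\normeuclide{\kfourier}}{T_{1,\kfourier}+T_{2,\kfourier}}\hat{\varphi}_\kfourier + \frac{2\pi\normeuclide{\kfourier}\,\rho_1 C}{\sinh(2\pi\normeuclide{\kfourier}h_1)(T_{1,\kfourier}+T_{2,\kfourier})},
\end{equation*}
and substituting the value of $C$ identifies the last term with $-\hat{g}_\kfourier$ as defined in \eqref{eq:defgk}.

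The main obstacle is purely algebraic: making sure that the elimination of $B$ between the definition of $\hat{\varphi}_\kfourier$ and the matching condition gives rise to the combination $T_{1,\kfourier}+T_{2,\kfourier}$, and checking the sign and the factor $2\pi$ in the source term $\hat{g}_\kfourier$. A small auxiliary point worth mentioning explicitly is the tacit boundary condition at horizontal infinity: one should work in a setting where the Fourier transform is well defined (for instance $\varphi(t,\cdot)\in L^2(\Omega)$ and $\hat{\Phi}_j(t,\kfourier,\cdot)$ bounded in $z$), so that the representation of $\hat{\Phi}_1,\hat{\Phi}_2$ above exhausts all admissible solutions of the ODE on the corresponding slab.
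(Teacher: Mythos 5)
Your proposal is correct and follows essentially the same route as the paper: Fourier transform in $\bmx$, hyperbolic ansatz for $\hat{\Phi}_1,\hat{\Phi}_2$ adapted to the top and bottom boundaries, determination of the coefficients from \eqref{eq:bottom_lin}, \eqref{eq:surface_lin}, \eqref{eq:interface0_lin} and \eqref{eq:varphi}, and elimination to read off $\partial_z\hat{\Phi}_2(t,\kfourier,0)$. The only differences are cosmetic (you eliminate the coefficient of $\hat{\Phi}_1$ directly rather than solving the $2\times 2$ system for $A_{2,\kfourier}$, and you make explicit the boundedness assumption justifying the ansatz), and your algebra is in fact cleaner than the paper's, which contains a couple of typographical slips in the intermediate formula for $A_{2,\kfourier}$.
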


\begin{proof}
To compute $\widehat{T[\varphi(t,\cdot)]}_{\kfourier}$, we compute the relation between $\hat{\varphi}_{\kfourier}$ and $\widehat{\tfrac{\partial \Phi_2}{\partial z}(t,\cdot,0)}_{\kfourier}$.\\
From~\eqref{eq:Poisson1_lin} and~\eqref{eq:Poisson2_lin}, for all $j \in \{ 1, 2 \}$ we have $\Delta_{\bmx} \Phi_j + \tfrac{\partial^2 \Phi_j}{\partial z^2} = 0$. The Fourier transform of $\Phi_j$ gives
\begin{equation*}
    \dfrac{\partial^2 \hat{\Phi}_{j,\kfourier}}{\partial z^2} = 4 \pi^2 \normeuclide{ \kfourier }^2 \hat{\Phi}_{j,\kfourier}.
\end{equation*}
Solving this second order ODE gives us the existence of $A_{j,\kfourier}$ and $B_{j,\kfourier}$ such that
\begin{align*}
    \hat{\Phi}_{1,\kfourier}(t,z) & = A_{1,\kfourier} \cosh(2 \pi \normeuclide{ \kfourier }(z-h_1)) +  B_{1,\kfourier} \sinh(2 \pi \normeuclide{ \kfourier }(z-h_1)) \\
    \hat{\Phi}_{2,\kfourier}(t,z) & = A_{2,\kfourier} \cosh(2 \pi \normeuclide{ \kfourier }(z+h_2)) +  B_{2,\kfourier} \sinh(2 \pi \normeuclide{ \kfourier }(z+h_2)).
\end{align*}
We investigate the values $A_{j,\kfourier}$ and $B_{j,\kfourier}$ by considering the boundary conditions of each fluid layer.\\
First, the bottom condition~\eqref{eq:bottom_lin} gives $B_{2,\kfourier} = 0$ so
\begin{equation}
    \label{eq:proof:PHI2}
    \hat{\Phi}_{2,\kfourier}(t,z) = A_{2,\kfourier} \cosh(2 \pi \normeuclide{ \kfourier }(z+h_2)).
\end{equation}
More over, the Fourier transform of~\eqref{eq:surface_lin} gives $\tfrac{\partial \hat{\Phi}_{1,\kfourier}}{\partial z}(t,h_1) = - 2 \pi i \kfourier \cdot \bmU(t) e^{-2\pi i \kfourier \cdot X(t)} \hat{f}_{\kfourier}$ so we deduce 
\begin{equation}
    \label{eq:proof:B1}
    B_{1,\kfourier} = - \tfrac{i \kfourier \cdot \bmU(t) e^{-2\pi i \kfourier \cdot X(t)}}{\normeuclide{ \kfourier }}\hat{f}_{\kfourier}.
\end{equation}
Equation~\eqref{eq:interface1_lin} gives $\tfrac{\partial \Phi_1}{\partial z}(t,\bmx,0) = \tfrac{\partial \Phi_2}{\partial z}(t,\bmx,0)$ then
\begin{equation}
    \label{eq:proof:syst1}
    A_{1,\kfourier} \sinh(2 \pi \normeuclide{ \kfourier } h_1) + A_{2,\kfourier} \sinh(2 \pi \normeuclide{ \kfourier } h_2) = B_{1,\kfourier} \cosh ( 2 \pi \normeuclide{ \kfourier } h_1 ).
\end{equation}
This equality is the first equation of a system. The second equality follows from~\eqref{eq:interface2_lin}:
\begin{equation}
    \label{eq:proof:syst2}
    A_{1,\kfourier} \rho_1 \cosh(2 \pi \normeuclide{ \kfourier } h_1) - A_{2,\kfourier} \rho_2 \cosh(2 \pi \normeuclide{ \kfourier } h_2) = \hat{\varphi}_{\kfourier}(t) + \rho_1 B_{1,\kfourier} \sinh(2 \pi \normeuclide{ \kfourier } h_1).
\end{equation}
From~\eqref{eq:proof:syst1}-\eqref{eq:proof:syst2}, we deduce that 
\begin{equation}
    \label{eq:proof:A2}
    A_{2,\kfourier} = - \dfrac{\hat{\varphi}_{\kfourier}}{\sinh(2 \pi \normeuclide{ \kfourier } h_1) \sin(2 \pi \normeuclide{ \kfourier } h_2) (T_{1,\kfourier}+T_{2,\kfourier})} + \dfrac{\rho_1 B_{1,\kfourier}}{\sinh(2 \pi \normeuclide{ \kfourier } h_1) \sin(2 \pi \normeuclide{ \kfourier } h_2) (T_{1,\kfourier}+T_{2,\kfourier})}.
\end{equation}
To end the proof, we have 
\begin{align*}
    \widehat{T[\varphi(t,\cdot)]}_{\kfourier} & = \dfrac{\partial \hat{\Phi}_{2,\kfourier}}{\partial z}(t,0) & \text{ by definition,}\\
        & = 2 \pi \normeuclide{ \kfourier } A_{2,\kfourier} \sinh(2 \pi \normeuclide{ \kfourier } h_2 ) & \text{ using~\eqref{eq:proof:PHI2},} \\
        & = - \dfrac{2 \pi \normeuclide{ \kfourier }}{T_{1,\kfourier}+T_{2,\kfourier}} + \dfrac{2 \pi \rho_1 \normeuclide{ \kfourier } B_{1,\kfourier}}{\sinh(2 \pi \normeuclide{ \kfourier } h_1) (T_{1,\kfourier}+T_{2,\kfourier})} & \text{ with~\eqref{eq:proof:A2},} \\
        & = - \dfrac{2 \pi \normeuclide{ \kfourier }}{T_{1,\kfourier} + T_{2,\kfourier}} \hat{\varphi}_{\kfourier} - \hat{g}_{\kfourier}  & \text{ thanks to~\eqref{eq:proof:B1}}.
\end{align*}
This last relation concludes the proof.
\end{proof}
Proposition~\ref{prop:Fourier:T} and equation~\eqref{eq:interface2_lin} give the following linear dynamical system:
\begin{align*}
    \dfrac{d \hat{\eta}_{\kfourier}}{d t} &  = - \dfrac{2 \pi \normeuclide{ \kfourier }}{T_{1,\kfourier} + T_{2,\kfourier}} \hat{\varphi}_{\kfourier} - \hat{g}_{\kfourier} \\
    \dfrac{d \hat{\varphi}_{\kfourier}}{d t} & = g (\rho_2 - \rho_1) \hat{\eta}_{\kfourier}.
\end{align*}
Diagonalizing the matrix $\begin{bmatrix}
     0 & - \tfrac{2 \pi \normeuclide{ \kfourier }}{T_{1,\kfourier} + T_{2,\kfourier}}\\
     g (\rho_2 - \rho_1) & 0
    \end{bmatrix}$ leads to two decoupled equations. We are going to focus on one of them, which allows us to compute the interface $\eta$ over time.
\begin{proposition}
\label{prop:Waterwaves}
Let $\mudiff_{\kfourier} = \hat{\eta}_{\kfourier} + i \sqrt{\dfrac{2 \pi \normeuclide{ \kfourier }}{(T_{1,\kfourier}+T_{2,\kfourier})(\rho_2-\rho_1)g}} \hat{\varphi}_{\kfourier}$. Then, function $\mudiff_{\kfourier} : t \geq 0 \mapsto \mudiff_{\kfourier}(t) \in \mathbb{C}$ satisfies
\begin{equation}
    \label{eq:WaterWaves}
    \dfrac{d \mudiff_{\kfourier}}{dt} = i \omega_{\kfourier} \mudiff_{\kfourier} - \hat{g}_{\kfourier}
\end{equation}
with the notation introduced in Proposition~\ref{prop:Fourier:T} and
\begin{equation}
    \label{eq:omega_k}
\omega_{\kfourier} = \sqrt{\dfrac{2 \pi \normeuclide{ \kfourier } (\rho_2-\rho_1)g}{T_{1,\kfourier}+T_{2,\kfourier}}}
\end{equation}
\end{proposition}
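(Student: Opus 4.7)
The plan is to verify directly, by differentiation and substitution, that the linear combination $\mudiff_{\kfourier}$ decouples the $2\times 2$ linear system obtained just before the statement. This is essentially a diagonalization computation: the $2\times 2$ matrix has eigenvalues $\pm i\omega_{\kfourier}$ with $\omega_{\kfourier}$ defined in~\eqref{eq:omega_k}, and the combination $\mudiff_{\kfourier}$ is chosen so as to be an eigenvector associated with $+i\omega_{\kfourier}$ (the source term $-\hat g_{\kfourier}$ is carried along by linearity).

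Concretely, I would introduce the shorthand $\alpha_{\kfourier} = \tfrac{2\pi \normeuclide{\kfourier}}{T_{1,\kfourier}+T_{2,\kfourier}}$ and $\beta = g(\rho_2-\rho_1)$, so that the system reduces to $\tfrac{d\hat\eta_{\kfourier}}{dt} = -\alpha_{\kfourier}\hat\varphi_{\kfourier}-\hat g_{\kfourier}$ and $\tfrac{d\hat\varphi_{\kfourier}}{dt}=\beta\hat\eta_{\kfourier}$, while $\omega_{\kfourier} = \sqrt{\alpha_{\kfourier}\beta}$ and $\mudiff_{\kfourier} = \hat\eta_{\kfourier} + i\sqrt{\alpha_{\kfourier}/\beta}\,\hat\varphi_{\kfourier}$. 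Differentiating in $t$ and substituting the two evolution equations yields
\begin{equation*}
\frac{d\mudiff_{\kfourier}}{dt}
=-\alpha_{\kfourier}\hat\varphi_{\kfourier}-\hat g_{\kfourier}
+ i\sqrt{\alpha_{\kfourier}/\beta}\,\beta\,\hat\eta_{\kfourier}
= i\sqrt{\alpha_{\kfourier}\beta}\,\hat\eta_{\kfourier}
- \alpha_{\kfourier}\hat\varphi_{\kfourier}
- \hat g_{\kfourier}.
\end{equation*}
It remains to recognize the right-hand side as $i\omega_{\kfourier}\mudiff_{\kfourier}-\hat g_{\kfourier}$. Indeed,
\begin{equation*}
i\omega_{\kfourier}\mudiff_{\kfourier}
= i\sqrt{\alpha_{\kfourier}\beta}\,\hat\eta_{\kfourier}
+ i\sqrt{\alpha_{\kfourier}\beta}\cdot i\sqrt{\alpha_{\kfourier}/\beta}\,\hat\varphi_{\kfourier}
= i\sqrt{\alpha_{\kfourier}\beta}\,\hat\eta_{\kfourier} - \alpha_{\kfourier}\hat\varphi_{\kfourier},
\end{equation*}
which matches. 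This establishes~\eqref{eq:WaterWaves}.

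There is no real obstacle here: the coefficient in the definition of $\mudiff_{\kfourier}$ is exactly tuned so that the cross terms cancel and the $\hat\eta_{\kfourier}$-coefficients combine to $i\omega_{\kfourier}$. The only minor care needed is that $\alpha_{\kfourier}\ge 0$ and $\beta>0$ (since $\rho_2>\rho_1$ and $T_{j,\kfourier}>0$), so that the square root $\sqrt{\alpha_{\kfourier}/\beta}$ is a well-defined real number and $\omega_{\kfourier}\in\mathbb{R}_{\ge 0}$; this is immediate from the assumption $0<\rho_1<\rho_2$ and from $\coth(2\pi\normeuclide{\kfourier}h_j)>0$ for $\kfourier\ne 0$.
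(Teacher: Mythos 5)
Your computation is correct and is precisely the ``simple calculation'' the paper invokes without writing out: differentiate $\mudiff_{\kfourier}$, substitute the two evolution equations, and check that the coefficient $i\sqrt{\alpha_{\kfourier}/\beta}$ is tuned so the result collapses to $i\omega_{\kfourier}\mudiff_{\kfourier}-\hat g_{\kfourier}$. No difference in approach; your version just makes the omitted algebra (and the positivity needed for the square roots) explicit.
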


\begin{proof}
Equation~\eqref{eq:WaterWaves} is obtained with simple calculation from $\mudiff_{\kfourier}$.
\end{proof}

Equation~\eqref{eq:omega_k} is known as the  \textit{dispersion relation} for this problem. It was first introduced by Stokes in 1847 for two layers of fluids with different densities~\cite{Stokes}. In equation~\eqref{eq:WaterWaves}, the interaction with the ship is entirely contained in the term $\hat{g}_{\kfourier}$.

As it will be observed in the rest of this work and in order to guarantee the existence of a solution in $L^2(\Omega)$ in every situation, we consider the equation in the ``viscous'' form :
\begin{align*}
    \dfrac{d \hat{\eta}_{\kfourier,\varepsilon}}{d t} &  = - \dfrac{2 \pi \normeuclide{ \kfourier }}{T_{1,\kfourier} + T_{2,\kfourier}} \hat{\varphi}_{\kfourier,\varepsilon} - \hat{g}_{\kfourier} - \varepsilon \hat{\eta}_{\kfourier,\varepsilon}\\
    \dfrac{d \hat{\varphi}_{\kfourier,\varepsilon}}{d t} & = g (\rho_2 - \rho_1) \hat{\eta}_{\kfourier,\varepsilon} - \varepsilon \hat{\varphi}_{\kfourier,\varepsilon}.
\end{align*}
where $\varepsilon>0$ is a small damping parameter.  Adding this transforms equation~\eqref{eq:WaterWaves} into
\begin{equation}
    \label{eq:WaterWaves_viscous}
    \dfrac{d \mudiff_{\kfourier,\varepsilon}}{dt} = (i \omega_{\kfourier} - \varepsilon) \mudiff_{\kfourier,\varepsilon} - \hat{g}_{\kfourier}
\end{equation}
This trick is well-known in the context of wave resistance (see, e.g.,~\cite{Havelock}), and the additional term is known as a Rayleigh artificial   damping term~\cite{Rayleigh1883} (see also~\cite[p. 399]{Lamb1924}). 

Starting with the same initial condition $\mudiff_{\kfourier,\varepsilon}(0) = \mudiff_{\kfourier}(0)$, we expect to have $\mudiff_{\kfourier,\varepsilon} \rightarrow \mudiff_{\kfourier}$ when $\varepsilon$ goes to $0$. Thus, simulations will be realized with $\varepsilon$ as small as possible. 

\section{The continuous problem}
\label{sec:continuouspb}
In this section, we analyze equation~\eqref{eq:WaterWaves_viscous}, which can be written
\begin{equation}
    \label{eq:WaterWaves_viscous2}
    \dfrac{d \mudiff_{\kfourier,\varepsilon}}{dt}(t) = i \omega_{\kfourier,\varepsilon}  \mudiff_{\kfourier,\varepsilon}(t) - \hat{g}_{\kfourier}(t),
\end{equation}
where $\omega_{\kfourier,\varepsilon}=\omega_{\kfourier}+i\varepsilon$ with $\varepsilon\ge 0$, $\omega_{\kfourier}$ is defined by~\eqref{eq:omega_k} and $\hat{g}_{\kfourier}(t)$ is defined by~\eqref{eq:defgk}. In $\hat{g}_{\kfourier}(t)$, the speed $U(t)$ is given and the position of the ship is $X(t)=\int_0^tU(s)ds$.

By Duhamel's formula, the solution to~\eqref{eq:WaterWaves_viscous2} reads (at least formally)
\begin{equation}
    \label{eq:Duhamel}
    \mudiff_{\kfourier,\varepsilon}(t)=\mudiff_{\kfourier,\varepsilon}(0)e^{i\omega_{\kfourier,\varepsilon}t}-\int_0^te^{i\omega_{\kfourier,\varepsilon}(t-\tau)}\hat{g}_{\kfourier}(\tau)d\tau.
\end{equation}
We introduce a functional framework for this formula. 
\subsection{Definition of a solution}
Recall that the space variable $\bmx$ belongs to $\mathbb{R}^d$ with $d=1$ or $2$. For each $t\ge 0$, we consider the function $\mudiff_{\varepsilon}(t):\kfourier\mapsto \mudiff_{\kfourier,\varepsilon}(t)$ as a function in $L^2(\mathbb{R}^d;\mathbb{C})=L^2(\mathbb{R}^d)$ and its $L^2$-norm is 
$$\normLdeux{\mudiff_{\varepsilon}(t)}=\left(\int_{\mathbb{R}^d}\left|\mudiff_{\kfourier,\varepsilon}(t)\right|^2d\kfourier\right)^{\sfrac{1}{2}}.$$
The $L^2$-norm is defined similarly for any function in $L^2(\mathbb{R}^d)$.

In~\eqref{eq:WaterWaves_viscous2}, the multiplication operator $i\omega_{\kfourier,\varepsilon}$ is unbounded in $L^2(\mathbb{R}^d)$ because $\left|\omega_{\kfourier,\varepsilon}\right|=\left(\omega_{\kfourier}^2+\varepsilon^2\right)^{\sfrac{1}{2}}$ behaves like $C\normeuclide{\kfourier}^{\sfrac{1}{2}}$ at $+\infty$, for a positive constant $C$. However, for all $t\ge 0$, $e^{i\omega_{\kfourier,\varepsilon}t}$ is a bounded operator in $L^2(\mathbb{R}^d)$ since $|e^{i\omega_{\kfourier,\varepsilon}t}|=e^{-\varepsilon t}\le 1$. Moreover,  the term $\hat{g}(t):\kfourier\mapsto\hat{g}_{\kfourier}(t)$ behaves well. Indeed, we have:

\begin{lemma}
\label{lem:gk}
Let   $T>0$.  Assume that $\bmU\in L^p(0,T;\mathbb{R}^d)$ with $1\le p\le +\infty$ and $f\in L^2(\mathbb{R}^d)$. Then $\hat{g}$ belongs to $L^p(0,T;L^2(\mathbb{R}^d))$.
\end{lemma}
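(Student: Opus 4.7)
The plan is to bound $\normLdeux{\hat{g}(t)}$ pointwise in $t$ by a multiple of $|\bmU(t)|$, and then take $L^p$ in time. Starting from the explicit formula~\eqref{eq:defgk}, the elementary inequality $|\kfourier\cdot\bmU(t)|\le\normeuclide{\kfourier}|\bmU(t)|$ together with $|e^{-2\pi i\kfourier\cdot X(t)}|=1$ yields the pointwise bound
$$|\hat{g}_{\kfourier}(t)| \le M(\kfourier)\,|\bmU(t)|\,|\hat{f}_{\kfourier}|, \qquad M(\kfourier):=\frac{2\pi\rho_1\normeuclide{\kfourier}}{\sinh(2\pi\normeuclide{\kfourier}h_1)\,(T_{1,\kfourier}+T_{2,\kfourier})}.$$

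The key step is to show that $M\in L^\infty(\Rd)$, and for this I would analyze the limits at the origin and at infinity. Using the expansions $\sinh(x)\sim x$ and $\coth(x)\sim 1/x$ as $x\to 0^+$, the denominator tends to the positive constant $\rho_1+h_1\rho_2/h_2$ when $\normeuclide{\kfourier}\to 0$, while the numerator is $O(\normeuclide{\kfourier})$, so $M(\kfourier)\to 0$ linearly at the origin. As $\normeuclide{\kfourier}\to+\infty$, $T_{j,\kfourier}\to\rho_j$ while $\sinh(2\pi\normeuclide{\kfourier}h_1)$ grows exponentially, so $M$ decays exponentially. Since $M$ is continuous on $\Rd\setminus\{0\}$ and extends continuously by zero at the origin, it is bounded on $\Rd$.

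Combining the pointwise estimate with Plancherel's identity $\normLdeux{\hat{f}}=\normLdeux{f}$ (valid for the convention~\eqref{eq:fourier}) gives, for almost every $t\in(0,T)$,
$$\normLdeux{\hat{g}(t)} \le \|M\|_{L^\infty(\Rd)}\,\normLdeux{f}\,|\bmU(t)|.$$
Taking the $L^p$-norm in $t$ of both sides and invoking $\bmU\in L^p(0,T;\Rd)$ (with the $\mathrm{ess\,sup}$ convention if $p=+\infty$) concludes the proof.

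The main obstacle is really the verification that $M$ is bounded near $\kfourier=0$: here $\sinh(2\pi\normeuclide{\kfourier}h_1)$ vanishes to first order and $T_{1,\kfourier}+T_{2,\kfourier}$ blows up like $1/\normeuclide{\kfourier}$ because $\coth$ has a simple pole, so a priori their product is an indeterminate form; the cancellation yields a positive constant, and it is the remaining $\normeuclide{\kfourier}$ factor in the numerator that forces $M(\kfourier)\to 0$. Once this cancellation is confirmed, everything else is routine.
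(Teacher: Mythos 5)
Your proposal is correct and follows essentially the same route as the paper's proof: the pointwise bound $|\hat{g}_{\kfourier}(t)|\le M(\kfourier)\normeuclide{\bmU(t)}|\hat f_{\kfourier}|$, boundedness of the radial multiplier via its limits at $0$ (where the $\sinh\times\coth$ cancellation gives the constant $\rho_1+\rho_2 h_1/h_2$) and at $+\infty$ (exponential decay), then Parseval and the $L^p$ norm in time. Your asymptotics agree with the paper's, so nothing further is needed.
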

Here and thereafter, for a given Hilbert space $H$ with norm $\|\cdot\|_H$, we denote by $L^p(0,T;H)$ the usual Banach space of (classes of) functions $v$ from $[0,T]$ into $H$ such that   $t\mapsto\|v(t)\|_H$ is a $p$-th integrable function on $[0,T]$.  Moreover, $C^0([0,T],H)$ is the Banach space of continuous functions from $[0,T]$ into $H$.
\begin{proof}
For almost all $t\in [0,T]$ and for almost all $\kfourier\in\mathbb{R}^d$, we have
\begin{equation}
    \label{eq:gk1}
    \left|\hat{g}_{\kfourier}(t)\right|\le N(\normeuclide{\kfourier})\normeuclide{\bmU(t)}\left|\hat{f}_{\kfourier}\right|,
    \end{equation}
where
\begin{equation}
    \label{eq:Ns}
    N(\normeuclide{\kfourier})=\frac{2\pi\rho_1\normeuclide{\kfourier}}{\sinh(2\pi\normeuclide{\kfourier}h_1)(T_{1,\kfourier}+T_{2,\kfourier})}.
    \end{equation}
The function $s\mapsto N(s)$ is continuous on $(0,+\infty)$. As $s\longrightarrow +\infty$, we have
$$N(s)\sim \frac{4\pi\rho_1 s}{e^{2\pi s h_1}(\rho_1+\rho_2)},$$
so that $N(s)\longrightarrow 0$ as $s\longrightarrow +\infty$.
Similarly, as $s\longrightarrow 0^+$, we have 
$$N(s)\sim\frac{2\pi\rho_1 s}{\rho_1+\rho_2 h_1/h_2},$$
so that $N(s)\longrightarrow 0$ as $s\longrightarrow 0^+$.
Thus, $N$ is bounded on $(0,+\infty)$, that is
$$0\le N(s)\le C(\rho_1,\rho_2,h_1,h_2)\quad\forall s\in(0,+\infty).$$
Consequently, for almost every $t\in[0,T]$, we have 
$$\normLdeux{\hat{g}(t)}\le C(\rho_1,\rho_2,h_1,h_2)\normeuclide{\bmU(t)}\normLdeux{\hat{f}}.$$
By Parseval's theorem, $\normLdeux{f}=\normLdeux{\hat{f}}$. Since  $\bmU\in L^p(0,T)$, we conclude that $\hat{g}$ belongs to $L^p$ as claimed.
\end{proof}
The (mild) solution is defined as follows (see, e.g.,~\cite{EN2000,Trelat2024}). 
\begin{definition}
Let $\varepsilon\ge 0$ and $T>0$. Assume that $\bmU\in L^1(0,T;\mathbb{R}^d)$ and $f\in L^2(\mathbb{R}^d)$. Then for every $\mudiff_{\varepsilon,0}\in L^2(\mathbb{R}^d)$, the function $\mudiff_{\varepsilon}\in C^0\left([0,T];L^2(\mathbb{R}^d)\right)$ defined by Duhamel's formula~\eqref{eq:Duhamel} is called \textbf{the mild solution} of~\eqref{eq:WaterWaves_viscous2} associated to the initial condition $\mudiff_{\varepsilon}(0)=\mudiff_{\varepsilon,0}$.
\end{definition}
\begin{remark}This well-posedness result includes the case $\varepsilon=0$, i.e. the model without damping. The same  is true with more regularity, see Theorems~\ref{th:strong} and~\ref{thm:wellposed}. The Rayleigh damping is especially useful for the stationary solution defined in Proposition~\ref{prop:stat}. It avoids for instance wave radiation in front of the ship.
\end{remark}

\begin{remark}
Given that 
\begin{equation}
    \label{eq:mu_epsilon}
    \mudiff_{\kfourier,\varepsilon}=\hat{\eta}_{\kfourier,\varepsilon}+i\alpha_{\kfourier}\hat{\varphi}_{\kfourier,\varepsilon},
\end{equation}
where $\alpha_{\kfourier} =  \sqrt{\dfrac{ 2 \pi \normeuclide{\kfourier} }{ ( T_{1,\kfourier}+T_{2,\kfourier})(\rho_2-\rho_1)g }}$, and noting that $\eta_\varepsilon$ and $\varphi_\varepsilon$ are real-valued functions, they can be recovered as follows:
\begin{equation}
    \label{eq:eta_varphi}
    \hat{\eta}_{\kfourier,\varepsilon}=\dfrac{\mudiff_{\kfourier,\varepsilon}+\overline{\mudiff_{-\kfourier,\varepsilon}}}{2}\quad\mbox{and}\quad
\hat{\varphi}_{\kfourier,\varepsilon}=\dfrac{\mudiff_{\kfourier,\varepsilon}-\overline{\mudiff_{-\kfourier,\varepsilon}}}{2i\alpha_{\kfourier}}.
\end{equation}
Indeed, since $\eta_\varepsilon$ and $\varphi_\varepsilon$ are real-valued functions, we have $\hat{\eta}_{\kfourier,\varepsilon}=\overline{\hat{\eta}_{-\kfourier,\varepsilon}}$ and $\hat{\varphi}_{\kfourier,\varepsilon}=\overline{\hat{\varphi}_{-\kfourier,\varepsilon}}$, so that 
\begin{equation}
    \label{eq:mu_epsilon2}
    \overline{\mudiff_{-\kfourier,\varepsilon}}=\hat{\eta}_{\kfourier,\varepsilon}-i\alpha_{\kfourier}\hat{\varphi}_{\kfourier,\varepsilon},
\end{equation}
where we used that $\overline{\alpha_{-\kfourier}}=\alpha_{\kfourier}$.
The relations~\eqref{eq:eta_varphi} follow from~\eqref{eq:mu_epsilon} and~\eqref{eq:mu_epsilon2}. The functions $\eta_{\varepsilon}$ and $\varphi_\varepsilon$ are the inverse Fourier transform of $\hat{\eta}_{\kfourier,\varepsilon}$ and $\hat{\varphi}_{\kfourier,\varepsilon}$.
\end{remark}

\begin{remark}
By applying the triangle inequality, we deduce from~\eqref{eq:mu_epsilon} and~\eqref{eq:eta_varphi} that for all $t\ge 0$,
$$\mudiff_{\kfourier,\varepsilon}(t)\in L^2(\mathbb{R}^d)\iff \hat{\eta}_{\kfourier,\varepsilon}(t)\in L^2(\mathbb{R}^d)\quad\text{and}\quad \alpha_{\kfourier}\hat{\varphi}_{\kfourier,\varepsilon}(t)\in L^2(\mathbb{R}^d).$$
\end{remark}

Assuming more regularity on the initial value, it is possible to define a notion of strong solution for problem~\eqref{eq:WaterWaves_viscous2}. This is shown in the next section.

\subsection{Existence and uniqueness of a strong solution}
For each $\varepsilon\ge 0$, we define the unbounded multiplication operator $A_\varepsilon b_{\kfourier}=i\omega_{\kfourier,\varepsilon}b_{\kfourier}$ with domain
\begin{eqnarray*}
D(A_\varepsilon)&=&\left\{b\in L^2(\mathbb{R}^d)\ |\ A_\varepsilon b\in L^2(\mathbb{R}^d)\right\}\\
&=&\left\{b\in L^2(\mathbb{R}^d)\ |\ \int_{\mathbb{R}^d}\left(\omega_{\kfourier}^2+\varepsilon^2\right)|b_{\kfourier}|^2d\kfourier<+\infty\right\}
\end{eqnarray*}
The operator $A_\varepsilon$ is densily defined, closed and m-accretive in $L^2(\mathbb{R}^d;\mathbb{C})$. It generates the semigroup of contractions $t\mapsto e^{i\omega_{\kfourier,\varepsilon}t}$ in $L^2(\mathbb{R}^d)$. For each $\varepsilon\ge 0$, $D(A_\varepsilon)$ is endowed with the Hilbertian norm
$$b\mapsto\left(\int_{\mathbb{R}^d}\left(1+\omega_{\kfourier}^2+\varepsilon^2\right)|b_{\kfourier}|^2d\kfourier\right)^{\sfrac{1}{2}}.$$
Moreover, $D(A_\varepsilon)$ is a Hilbert space for this norm.

Equation~\eqref{eq:WaterWaves_viscous2} is a non-homogenous problem involving $A_\varepsilon$. Concerning the source term $\hat{g}$, we have:
\begin{lemma}
Let $\varepsilon\ge 0$, $T>0$ and assume that  $f\in L^2(\mathbb{R}^d)$. If  $\bmU\in L^p(0,T;\mathbb{R}^d)$ with $1\le p\le +\infty$, then $\hat{g}$ belongs to $L^p(0,T;D(A_\varepsilon))$. Moreover, if $\bmU\in C^0([0,T];\mathbb{R}^d)$, then $\hat{g}$ belongs to $C^0([0,T];D(A_\varepsilon))$. 
\end{lemma}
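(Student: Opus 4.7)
The plan is to extend the estimate from Lemma~\ref{lem:gk} by including the weight coming from the graph norm of $A_\varepsilon$. Starting from the pointwise bound $|\hat{g}_{\kfourier}(t)|\le N(\normeuclide{\kfourier})\normeuclide{\bmU(t)}|\hat{f}_{\kfourier}|$ established in the proof of Lemma~\ref{lem:gk}, I would write
\[
(1+\omega_{\kfourier}^2+\varepsilon^2)|\hat{g}_{\kfourier}(t)|^2 \le (1+\omega_{\kfourier}^2+\varepsilon^2)\,N(\normeuclide{\kfourier})^2\,\normeuclide{\bmU(t)}^2\,|\hat{f}_{\kfourier}|^2,
\]
and reduce the whole statement to showing that the scalar weight $M_\varepsilon:=\sup_{s>0}(1+\omega(s)^2+\varepsilon^2)N(s)^2$ is finite, where $\omega(s)$ denotes $\omega_{\kfourier}$ as a function of $s=\normeuclide{\kfourier}$. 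Once this is granted, integrating in $\kfourier$ and using Parseval's identity yields $\|\hat{g}(t)\|_{D(A_\varepsilon)}\le M_\varepsilon^{1/2}\normeuclide{\bmU(t)}\|f\|_{L^2}$ for a.e.\ $t\in[0,T]$, and the $L^p$ conclusion follows by integrating this scalar bound in time exactly as in Lemma~\ref{lem:gk}.

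The core step is therefore the analysis of the scalar function $s\mapsto(1+\omega(s)^2+\varepsilon^2)N(s)^2$. Since $N$ has already been shown to be continuous and bounded on $(0,+\infty)$, and $\omega(s)^2=\frac{2\pi s(\rho_2-\rho_1)g}{T_1(s)+T_2(s)}$ is continuous there, it suffices to check the limits at the two endpoints. As $s\to +\infty$, $T_j(s)\to\rho_j$ while $\sinh(2\pi s h_1)$ grows exponentially, so $N(s)^2$ decays like $s^2e^{-4\pi s h_1}$ and $\omega(s)^2$ grows at most linearly, making the product decay exponentially. As $s\to 0^+$, using $\coth(x)\sim 1/x$ one gets $T_1(s)+T_2(s)\sim \frac{1}{2\pi s}(\rho_1/h_1+\rho_2/h_2)$, hence $\omega(s)^2=O(s^2)$ while $N(s)^2=O(s^2)$ from Lemma~\ref{lem:gk}, and the product vanishes like $s^4$. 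Continuity on $(0,+\infty)$ together with both limits gives the uniform bound.

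For the continuity statement, I note first that if $\bmU\in C^0([0,T];\mathbb{R}^d)$ then $X(t)=\int_0^t\bmU(s)ds$ is also continuous, and the explicit formula~\eqref{eq:defgk} shows that for each fixed $\kfourier$ the map $t\mapsto \hat{g}_{\kfourier}(t)$ is continuous. To lift pointwise continuity to continuity in the graph norm of $A_\varepsilon$, I would apply the dominated convergence theorem to
\[
(1+\omega_{\kfourier}^2+\varepsilon^2)|\hat{g}_{\kfourier}(t)-\hat{g}_{\kfourier}(t_0)|^2,
\]
which tends to $0$ pointwise in $\kfourier$ as $t\to t_0$, and is dominated uniformly in $t\in[0,T]$ by the integrable function $4M_\varepsilon\,\|\bmU\|_{C^0([0,T])}^2\,|\hat{f}_{\kfourier}|^2$ thanks to the uniform bound derived in the previous paragraph.

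The only mildly delicate point is the asymptotic analysis of the scalar weight at $0^+$ and $+\infty$, where one must track the competition between the exponential decay inside $\sinh(2\pi s h_1)$ and the polynomial growth of $\omega(s)^2$ at infinity, and the simultaneous vanishing of $\omega(s)^2$ and $N(s)^2$ at the origin. In both cases the dominant factor is clear from a short Taylor/exponential expansion, so this remains a routine computation rather than a genuine obstacle.
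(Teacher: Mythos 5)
Your proposal is correct and follows essentially the same route as the paper: both reduce the claim to the boundedness of a scalar weight (you use $(1+\omega(s)^2+\varepsilon^2)N(s)^2$, the paper uses $(\omega_s^2+\varepsilon^2)^{1/2}N(s)$ combined with Lemma~\ref{lem:gk}), verified via the same endpoint asymptotics of $\omega$ and $N$, followed by integration in $\kfourier$, Parseval, and dominated convergence for the continuity statement. The difference is purely cosmetic.
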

\begin{proof}By~\eqref{eq:gk1}, for almost all $t\in[0,T]$ and almost all $\kfourier\in\mathbb{R}^d$, we have
\begin{equation}
    \label{eq:gk2}
    \left|i\omega_{\kfourier,\varepsilon}\hat{g}_{\kfourier}(t)\right|\le |\omega_{\kfourier}^2+\varepsilon^2|^{\sfrac{1}{2}} N(\normeuclide{\kfourier})\normeuclide{\bmU(t)}\left|\hat{f}_{\kfourier}\right|,
    \end{equation}
    where $N$ is defined by~\eqref{eq:Ns}. By arguing as in the proof of Lemma~\ref{lem:gk}, we see that the function 
    $$s\mapsto |\omega_{s}^2+\varepsilon^2|^{\sfrac{1}{2}} N(s)$$
    is continuous on $(0,+\infty)$, and that it tends to $0$ as $s$ tends to $0$ or $+\infty$. Indeed, $\omega_{\kfourier}$ is equivalent to a constant times $\normeuclide{\kfourier}^{\sfrac{1}{2}}$ near $+\infty$, and $\omega_{\kfourier}$ is equivalent to a constant times $\normeuclide{\kfourier}$ near $0$. Thus, there is a constant $\tilde{C}$ (depening on $\rho_1$, $\rho_2$, $h_1$ and $h_2$) such that 
    $$\normLdeux{A_\varepsilon\hat{g}(t)}\le \tilde{C}\normeuclide{\bmU(t)}\normLdeux{\hat{f}},$$
    for almost all $t\in[0,T]$. If $\bmU$ belongs to $L^p(0,T)$, then $A_\varepsilon \hat{g}$ belongs to $L^p(0,T;L^2(\mathbb{R}^d))$ as claimed. If $\bmU$ is continuous on $[0,T]$, we deduce from Lebesgue's dominated convergence theorem that $A_\varepsilon \hat{g}$ is continuous on $[0,T]$ with values in $L^2(\mathbb{R}^d)$. This completes the proof.
\end{proof}
Using the regularity of $\hat{g}$, we deduce the existence and uniqueness of a {\it strong} solution~\cite{EN2000,Trelat2024}.
\begin{theorem}
\label{th:strong}
Let $\varepsilon\ge 0$ and $T>0$. Assume that  $f\in L^2(\mathbb{R}^d)$ and  $\bmU\in L^p(0,T;\mathbb{R}^d)$ with $1\le p\le +\infty$. Then for every $\mudiff_{\varepsilon,0}\in D(A_\varepsilon)$, there is a unique solution $$\mudiff_\varepsilon\in C^0([0,T];D(A_\varepsilon))\cap W^{1,p}(0,T;L^2(\Rd))$$
of~\eqref{eq:WaterWaves_viscous2} such that $\mudiff_\varepsilon(0)=\mu_{\varepsilon,0}$. Moreover, $\mudiff_\varepsilon$ is given by Duhamel's formula~\eqref{eq:Duhamel}.
\end{theorem}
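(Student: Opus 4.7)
The plan is to apply the standard semigroup theory for the non-homogeneous abstract Cauchy problem
$$\mudiff_\varepsilon'(t) = A_\varepsilon \mudiff_\varepsilon(t) - \hat{g}(t),\quad \mudiff_\varepsilon(0) = \mudiff_{\varepsilon,0},$$
exploiting that $A_\varepsilon$ generates the $C^0$-semigroup of contractions $(e^{tA_\varepsilon})_{t\ge 0}$ acting pointwise as multiplication by $e^{i\omega_{\kfourier,\varepsilon}t}$, together with the regularity $\hat{g}\in L^p(0,T;D(A_\varepsilon))$ supplied by the preceding lemma. The candidate solution is the one defined by Duhamel's formula~\eqref{eq:Duhamel}, and the two points to establish are the claimed regularity and uniqueness.

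First I would show that $\mudiff_\varepsilon\in C^0([0,T];D(A_\varepsilon))$. The crucial observation is that $A_\varepsilon$ is a pointwise multiplication operator, hence commutes with the semigroup and can be pulled inside the Bochner integral. Applying $A_\varepsilon$ to~\eqref{eq:Duhamel} yields
$$A_\varepsilon\mudiff_\varepsilon(t) = e^{tA_\varepsilon}(A_\varepsilon\mudiff_{\varepsilon,0}) - \int_0^t e^{(t-\tau)A_\varepsilon}\bigl(A_\varepsilon\hat{g}(\tau)\bigr)\,d\tau,$$
which is justified by Fubini's theorem, since $A_\varepsilon\mudiff_{\varepsilon,0}\in L^2(\Rd)$ (because $\mudiff_{\varepsilon,0}\in D(A_\varepsilon)$) and $A_\varepsilon\hat{g}\in L^p(0,T;L^2(\Rd))\subset L^1(0,T;L^2(\Rd))$ for finite $T$. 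The first term is continuous into $L^2$ by strong continuity of the semigroup, and continuity of the convolution with values in $L^2$ is the standard property of the convolution of a $C^0$-semigroup with an $L^1$ source; continuity of $\mudiff_\varepsilon$ itself into $L^2$ follows by the same reasoning applied to~\eqref{eq:Duhamel} directly, so altogether $\mudiff_\varepsilon$ is continuous into $D(A_\varepsilon)$ equipped with its graph norm.

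Finally, since $\mudiff_\varepsilon(t)\in D(A_\varepsilon)$ for every $t$ and $\hat{g}\in L^p(0,T;L^2(\Rd))$, differentiating~\eqref{eq:Duhamel} in $L^2(\Rd)$ gives
$$\mudiff_\varepsilon'(t) = A_\varepsilon\mudiff_\varepsilon(t) - \hat{g}(t)$$
for almost every $t\in[0,T]$, and the right-hand side belongs to $L^p(0,T;L^2(\Rd))$, yielding $\mudiff_\varepsilon\in W^{1,p}(0,T;L^2(\Rd))$. For uniqueness, the difference $w$ of two solutions satisfies the linear homogeneous problem with zero initial data, and pointwise in $\kfourier$ one has $w_{\kfourier}'(t)=i\omega_{\kfourier,\varepsilon}w_{\kfourier}(t)$ with $w_{\kfourier}(0)=0$, so $w_{\kfourier}\equiv 0$. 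I expect the only delicate step to be the commutation of $A_\varepsilon$ with the Bochner integral in Duhamel's formula; here it is essentially painless because $A_\varepsilon$ is a multiplication operator, whereas in the abstract framework of~\cite{EN2000,Trelat2024} one would invoke a general regularity theorem for non-homogeneous Cauchy problems.
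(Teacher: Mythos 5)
Your proposal is correct and follows essentially the same route as the paper: the paper establishes the key ingredient $\hat{g}\in L^p(0,T;D(A_\varepsilon))$ in the preceding lemma and then simply invokes the classical theory of non-homogeneous abstract Cauchy problems from~\cite{EN2000,Trelat2024}, which is exactly the argument you write out in detail. Your observation that the commutation of $A_\varepsilon$ with the Bochner integral is painless because $A_\varepsilon$ is a multiplication operator is a legitimate shortcut through the standard regularity theorem, and the rest (continuity into $D(A_\varepsilon)$, a.e.\ differentiability, pointwise-in-$\kfourier$ uniqueness) matches what the cited references provide.
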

If $\bmU$ is continuous, we have existence and uniqueness of a {\it classical} solution~\cite{EN2000}. 
\begin{theorem}
\label{thm:wellposed}
Let $\varepsilon\ge 0$ and $T>0$. Assume that  $f\in L^2(\mathbb{R}^d)$ and $\bmU\in C^0([0,T];\Rd)$. Then for every $\mudiff_{\varepsilon,0}\in D(A_\varepsilon)$, there is a unique solution 
$$\mudiff_\varepsilon\in C^0([0,T];D(A_\varepsilon))\cap C^1([0,T];L^2(\Rd))$$
of~\eqref{eq:WaterWaves_viscous2} such that $\mudiff_\varepsilon(0)=\mu_{\varepsilon,0}$. Moreover, $\mudiff_\varepsilon$ is given by Duhamel's formula~\eqref{eq:Duhamel}.
\end{theorem}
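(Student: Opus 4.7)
The plan is to view~\eqref{eq:WaterWaves_viscous2} as an abstract inhomogeneous Cauchy problem on the Hilbert space $L^2(\Rd)$ governed by the generator $A_\varepsilon$ and to invoke the classical existence theorem for non-homogeneous linear evolution equations with a continuous source into the domain of the generator. Three ingredients are already in place: (i) $A_\varepsilon$ has been shown to be densely defined, closed and m-accretive, so it generates the $C_0$-semigroup of contractions $t\mapsto e^{i\omega_{\kfourier,\varepsilon}t}$ on $L^2(\Rd)$; (ii) the mild solution is already defined pointwise in time by Duhamel's formula~\eqref{eq:Duhamel}; and (iii) the preceding lemma, applied with the stronger hypothesis $\bmU\in C^0([0,T];\Rd)$, guarantees that $\hat{g}\in C^0([0,T];D(A_\varepsilon))$.

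With these three facts, I would directly apply the standard semigroup-theoretic result (see for instance~\cite{EN2000,Trelat2024}): if $A$ generates a $C_0$-semigroup, $x_0\in D(A)$ and $g\in C^0([0,T];D(A))$, then the mild solution given by the variation-of-constants formula is a classical solution, lying in $C^0([0,T];D(A))\cap C^1([0,T];X)$ and satisfying the equation pointwise. Specializing to $A=A_\varepsilon$, $x_0=\mudiff_{\varepsilon,0}$ and $g=\hat{g}$ produces a function $\mudiff_\varepsilon$ with exactly the regularity announced in the theorem, and the formula~\eqref{eq:Duhamel} is then simply the abstract variation-of-constants representation.

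For uniqueness, I would note that any classical solution is in particular a strong solution in the sense of Theorem~\ref{th:strong} (with $p=1$, say), so uniqueness is already covered there. As an alternative, one may argue pointwise in Fourier space: for each fixed $\kfourier$, equation~\eqref{eq:WaterWaves_viscous2} reduces to a scalar linear ODE in $t$ with continuous source $\hat{g}_{\kfourier}(\cdot)$, and Cauchy--Lipschitz yields $\mudiff_{\kfourier,\varepsilon}(t)=\mudiff_{\kfourier,\varepsilon}(0)e^{i\omega_{\kfourier,\varepsilon}t}-\int_0^te^{i\omega_{\kfourier,\varepsilon}(t-\tau)}\hat{g}_{\kfourier}(\tau)d\tau$ almost everywhere, which forces agreement with~\eqref{eq:Duhamel} in $L^2(\Rd)$.

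There is no real analytic obstacle here; the proof is essentially a bookkeeping exercise. The only point that deserves a quick sanity check is the transfer from continuity of $\bmU$ to continuity of $\hat{g}$ with values in $D(A_\varepsilon)$ rather than merely in $L^2(\Rd)$. This follows from dominated convergence using the $\kfourier$-independent bound $|\omega_{\kfourier,\varepsilon}|\,N(\normeuclide{\kfourier})\le\tilde{C}$ established in the proof of the preceding lemma, together with the dominating function $\tilde C\,\sup_{[0,T]}\normeuclide{\bmU}|\hat{f}_{\kfourier}|\in L^2(\Rd)$ coming from $\hat{f}\in L^2(\Rd)$ via Parseval.
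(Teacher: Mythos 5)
Your proposal matches the paper's argument: the paper also deduces Theorem~\ref{thm:wellposed} by combining the lemma showing $\hat{g}\in C^0([0,T];D(A_\varepsilon))$ when $\bmU$ is continuous with the standard semigroup result for non-homogeneous Cauchy problems cited in~\cite{EN2000}, the solution being given by Duhamel's formula. Your additional remarks on uniqueness and on the dominated-convergence step are consistent with, and slightly more explicit than, what the paper records.
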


\subsection{Solution for a constant speed}
\label{sec:const_U}
In this section, we assume that the ship moves at constant speed $\bmU(t)=\Ux \bm{e_x}$ along the $x-$axis, where $\bm{e_x}$ is a unit vector in the direction of the $x-$axis and $\Ux\in\mathbb{R}$ is constant. 
We assume for simplicity that $X(0)=0$, so that $X(t)=\Ux \, t \, \bm{e_x}$. We first have:
\begin{proposition}
\label{prop:stat}
Let $\varepsilon>0$. The stationary solution to~\eqref{eq:WaterWaves_viscous2}  is given by
\begin{equation}
    \label{eq:mu_stationary}
    \mudiff_{\kfourier,\varepsilon}^\star(t)=\frac{D_{\kfourier}}{i(2\pi\kUx+\omega_{\kfourier,\varepsilon})}e^{-2\pi i\kUx t},
\end{equation}
where 
$$D_{\kfourier}=\frac{2\pi \rho_1\kUx \hat{f}_{\kfourier}}{\sinh(2\pi\normeuclide{\kfourier}h_1)(T_{1,\kfourier}+T_{2,\kfourier})}.$$
\end{proposition}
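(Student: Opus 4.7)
The plan is to look for a traveling-wave ansatz
\[
\mudiff^\star_{\kfourier,\varepsilon}(t) = A_{\kfourier}\, e^{-2\pi i \kUx t},
\]
where $A_{\kfourier}$ is time-independent. This ansatz is natural for two reasons. Physically, when the ship moves at constant velocity, the flow must be stationary in the frame attached to the ship; a Galilean translation by $X(t)=\Ux\, t\, \bm{e_x}$ corresponds in Fourier variables precisely to a multiplication by $e^{-2\pi i \kUx t}$. Algebraically, when we substitute $\bmU(t)=\Ux \bm{e_x}$ and $X(t)=\Ux\, t\, \bm{e_x}$ into the definition~\eqref{eq:defgk} of $\hat{g}_{\kfourier}(t)$, the source term inherits exactly this time-dependence, namely $\hat{g}_{\kfourier}(t) = i D_{\kfourier}\, e^{-2\pi i \kUx t}$, with $D_{\kfourier}$ as in the statement.

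I would then plug the ansatz into~\eqref{eq:WaterWaves_viscous2}. Both sides become proportional to $e^{-2\pi i \kUx t}$, so the exponential factor cancels and one is left with a purely algebraic equation for $A_{\kfourier}$, namely a scalar relation of the form $-2\pi i \kUx A_{\kfourier} = i\omega_{\kfourier,\varepsilon} A_{\kfourier} - iD_{\kfourier}$. Solving for $A_{\kfourier}$ in closed form yields the expression announced in the proposition.

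The only point that still requires care is that the formula genuinely makes sense for \emph{every} $\kfourier$, i.e.\ that the denominator $2\pi\kUx+\omega_{\kfourier,\varepsilon}$ never vanishes. This is where the Rayleigh damping enters in an essential (not merely cosmetic) way: since $\omega_{\kfourier}\in\mathbb{R}$ and $\omega_{\kfourier,\varepsilon}=\omega_{\kfourier}+i\varepsilon$, the imaginary part of $2\pi\kUx+\omega_{\kfourier,\varepsilon}$ equals $\varepsilon>0$, so the denominator is nonzero for every $\kfourier\in\mathbb{R}^d$. Without damping, this quantity would vanish on the resonance set $\omega_{\kfourier}=-2\pi\kUx$, which is intimately related to the critical-speed discussion that follows. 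I do not anticipate any other obstacle: the proof reduces in substance to the verification that the proposed ansatz satisfies the ODE, once the correct frequency in the exponential has been identified and the role of $\varepsilon$ has been spelled out.
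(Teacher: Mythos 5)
Your approach is essentially the one the paper takes: identify the time dependence of the source term for a constant speed, namely $\hat{g}_{\kfourier}(t)=c_{\kfourier}e^{-2\pi i\kUx t}$ with $c_{\kfourier}$ time-independent, and then check by direct substitution that a traveling-wave profile proportional to $e^{-2\pi i\kUx t}$ solves the equation (you substitute into the ODE~\eqref{eq:WaterWaves_viscous2}, the paper verifies Duhamel's formula~\eqref{eq:Duhamel}; the two verifications are equivalent here). Your additional observation that $\varepsilon>0$ forces $\mathrm{Im}(2\pi\kUx+\omega_{\kfourier,\varepsilon})=\varepsilon\neq 0$, so the denominator never vanishes and the formula is defined for every $\kfourier\neq 0$, is correct and is precisely the point the paper makes in the discussion following the proposition.

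There is, however, one concrete inconsistency in your final step. You correctly read off from~\eqref{eq:defgk} that $\hat{g}_{\kfourier}(t)=iD_{\kfourier}e^{-2\pi i\kUx t}$ (the factor $i$ comes from the Fourier transform of the gradient of $f$). But the algebraic relation you then display, $-2\pi i\kUx A_{\kfourier}=i\omega_{\kfourier,\varepsilon}A_{\kfourier}-iD_{\kfourier}$, solves to $A_{\kfourier}=D_{\kfourier}/(2\pi\kUx+\omega_{\kfourier,\varepsilon})$, not to the announced $D_{\kfourier}/\bigl(i(2\pi\kUx+\omega_{\kfourier,\varepsilon})\bigr)$; the two differ by the unimodular factor $i$. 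The paper's own proof writes $\hat{g}_{\kfourier}(t)=D_{\kfourier}e^{-2\pi i\kUx t}$ in~\eqref{eq:gk}, i.e.\ without the $i$, which is what makes~\eqref{eq:mu_stationary} come out --- but that version of $\hat{g}$ is not consistent with~\eqref{eq:defgk}. So either~\eqref{eq:gk} or~\eqref{eq:mu_stationary} carries a spurious factor of $i$ relative to~\eqref{eq:defgk}. Your derivation is internally sound up to the last line, but the claim that it ``yields the expression announced in the proposition'' does not follow from the relation you wrote: you should either carry the factor $i$ through and state the corrected constant, or flag the discrepancy explicitly. (The issue is harmless for the subsequent $L^2$ and wake discussions, since only $|2\pi\kUx+\omega_{\kfourier,\varepsilon}|$ matters there.)
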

This stationary solution corresponds to a wave moving at the same speed as the ship and which is a steady state in the frame of reference of the ship. Indeed, if $\mu_{\varepsilon}^\star(t)$ denotes the inverse Fourier transform of $\mudiff_{\varepsilon}^\star(t)$ with initial condition $\mu_{\varepsilon}(0)=\mu_{\varepsilon,0}^\star$, we have
\begin{equation}
    \label{eq:initial_steady}
    \mu_{\varepsilon}^\star(t)=\mu_{\varepsilon,0}^\star(\bm{x}-\Ux t\bm{e_x}),
\end{equation}
by standard operations on the Fourier transform (see~\eqref{eq:fourier}). 

We note that for $\varepsilon=0$, $\mudiff_{0}^\star(t)$ does not generally belong to $L^2(\mathbb{R}^d)$, because the term $2\pi\kUx+\omega_{\kfourier,\varepsilon}$ may vanish for some values of $\kfourier\not=0$ (see Sections~\ref{subsec:1D_theory} and~\ref{subsec:2D_theory}). 
In contrast, for $\varepsilon>0$, this term never vanishes for $\kfourier\not=0$ and  $\mudiff_{\varepsilon}^\star(t)$ belongs to $L^2(\mathbb{R}^d)$.
The parameter $\varepsilon>0$ acts as a regularization of equation~\eqref{eq:WaterWaves}, known as Rayleigh's trick (see, e.g.,~\cite[p. 399]{Lamb1924}).

\begin{proof} We have 
\begin{equation}
    \label{eq:gk}
    \hat{g}_{\kfourier}(t)=D_{\kfourier}e^{-2\pi i\kUx t}.
    \end{equation}
A simple calculation shows that $\mudiff_{\kfourier,\varepsilon}^\star(t)$ given by~\eqref{eq:mu_stationary} solves~\eqref{eq:Duhamel}.
\end{proof}

In the case of a constant speed, the solution can be computed. We have:
\begin{proposition}
\label{prop:solUconst}
Let $\varepsilon\ge 0$ and $\mudiff_{\varepsilon,0}\in L^2(\mathbb{R}^d)$. The solution to~\eqref{eq:WaterWaves_viscous2}  is given by
$$\mu_{\kfourier,\varepsilon}(t)=\mu_{\kfourier,\varepsilon}(0)e^{i\omega_{\kfourier,\varepsilon}t}-\frac{D_{\kfourier}}{i(2\pi\kUx+\omega_{\kfourier,\varepsilon})}e^{i\omega_{\kfourier,\varepsilon}t}\left(1-e^{-i(2\pi \kUx+\omega_{\kfourier,\varepsilon})t}\right).$$
\end{proposition}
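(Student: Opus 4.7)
The plan is essentially a direct computation: apply the Duhamel formula~\eqref{eq:Duhamel} with the explicit form of $\hat{g}_{\kfourier}(t)$ obtained in the proof of Proposition~\ref{prop:stat}. The hypotheses of Proposition~\ref{prop:solUconst} (constant speed, $\mudiff_{\varepsilon,0}\in L^2$) fall within the mild-solution framework of Theorem~\ref{th:strong} (since a constant $\bmU$ lies in every $L^p(0,T;\Rd)$), so~\eqref{eq:Duhamel} is valid and it suffices to evaluate the convolution integral pointwise in $\kfourier$.

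The first step is to substitute the identity $\hat{g}_{\kfourier}(\tau)=D_{\kfourier}e^{-2\pi i\kUx\tau}$ from~\eqref{eq:gk} into~\eqref{eq:Duhamel}. Factoring $e^{i\omega_{\kfourier,\varepsilon}t}$ out of the integral reduces the task to computing
\[
\int_0^t e^{-i(2\pi\kUx+\omega_{\kfourier,\varepsilon})\tau}\,d\tau,
\]
which, whenever $2\pi\kUx+\omega_{\kfourier,\varepsilon}\neq 0$, equals $\bigl(1-e^{-i(2\pi\kUx+\omega_{\kfourier,\varepsilon})t}\bigr)/\bigl(i(2\pi\kUx+\omega_{\kfourier,\varepsilon})\bigr)$. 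Multiplying back by $-D_{\kfourier}e^{i\omega_{\kfourier,\varepsilon}t}$ and adding the homogeneous part $\mudiff_{\kfourier,\varepsilon}(0)e^{i\omega_{\kfourier,\varepsilon}t}$ gives exactly the announced formula.

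The only subtlety is the handling of values of $\kfourier$ for which the denominator $2\pi\kUx+\omega_{\kfourier,\varepsilon}$ vanishes. For $\varepsilon>0$ this quantity has nonzero imaginary part $\varepsilon$ whenever $\kfourier\neq 0$, so the denominator is nonzero a.e.\ and the formula is unambiguous as an equality in $L^2(\Rd)$. For $\varepsilon=0$, the ``resonant'' set $\{\kfourier\in\Rd : 2\pi\kUx+\omega_{\kfourier}=0\}$ is at most a hypersurface of $\Rd$, hence of Lebesgue measure zero; on this set both numerator and denominator in the quotient vanish, and the formula must be interpreted by continuous extension (the corresponding limit $-D_{\kfourier}\,t\,e^{i\omega_{\kfourier}t}$ is precisely what the integral delivers). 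Since a null set does not affect the $L^2$ representative, the stated identity holds pointwise a.e., which is all that is required. There is no serious obstacle here; the proposition is a one-line verification once the explicit form of $\hat{g}_{\kfourier}$ is inserted into Duhamel's formula.
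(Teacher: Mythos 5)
Your proposal is correct and follows exactly the paper's (very terse) argument: insert the explicit form $\hat{g}_{\kfourier}(\tau)=D_{\kfourier}e^{-2\pi i\kUx\tau}$ into Duhamel's formula and integrate in time. Your extra remark on the resonant set matches the paper's own comment after the proposition that the case $\varepsilon=0$ remains valid because the factor $1-e^{-i(2\pi\kUx+\omega_{\kfourier,\varepsilon})t}$ cancels the singularity of the denominator.
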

\begin{proof}
Since $\hat{g}_k$ is given by~\eqref{eq:gk}, Duhamel's formula~\eqref{eq:Duhamel} yields the solution by integrating with respect to time. 
\end{proof}
In Proposition~\ref{prop:solUconst}, the case $\varepsilon=0$ is also valid since the last term in parentheses cancels the singularity of the denominator. 

\subsection{One-dimensional wave propagation}
\label{subsec:1D_theory}
In this section, we assume that the speed of the ship is constant, i.e. $\bmU(t)=\Ux\bm{e_x}$, as in section~\ref{sec:const_U}. We also assume that the waves propagate in one dimension, i.e. $\bm{x}\in\mathbb{R}$ and $\kfourier=\kappa_x\in\mathbb{R}$. We focus on the case $\varepsilon=0$, so that $\omega_{\kfourier,\varepsilon}=\omega_{\kfourier}$, as defined in~\eqref{eq:omega_k}.  

It is interesting to introduce a critical velocity for the 1D problem, defined by 
\begin{equation}
    \label{eq:Uc}
U_c=\sqrt{\dfrac{(\rho_2-\rho_1)g}{\rho_1/h_1+\rho_2/h_2}}.
\end{equation}
Note that for a water-air interface, where $h_1\longrightarrow +\infty$, $h_2$ is finite and $\rho_2>\!\!>\rho_1$, we have $U_c\approx \sqrt{gh_2}$, a well-known formula~\cite{Lamb1924}.\\

If $\Ux<U_c$, the speed is \textit{subcritical} and if $\Ux>U_c$, the speed is \textit{supercritical}. Let 
\begin{equation}
    \label{eq:phasevelocity}
    v_p(\kfourier)=\frac{\omega_{\kfourier}}{2\pi\normeuclide{\kfourier}}
\end{equation}
denote the phase velocity~\cite{whitham2011linear}. 
We have: 
\begin{theorem}
\label{thm:oneD_wave}
If $\Ux<U_c$, there exists a unique $\kfourier_c^\star>0$ such that $v_p(\kfourier_c^\star)=\Ux$. \\
If $\Ux>U_c$, there is no $k_c>0$ such that $v_p(k_c)=\Ux$. 
\end{theorem}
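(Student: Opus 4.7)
The plan is to recast the equation $v_p(\kappa_c) = \Ux$ as a one-variable problem in $s = 2\pi\normeuclide{\kfourier} > 0$ and to study the resulting scalar function via a monotonicity argument.

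First I would write, using $\omega_{\kfourier}$ from \eqref{eq:omega_k},
\begin{equation*}
v_p(\kfourier)^2 = \frac{\omega_{\kfourier}^2}{4\pi^2\normeuclide{\kfourier}^2} = \frac{(\rho_2-\rho_1)g}{s\bigl(\rho_1\coth(sh_1)+\rho_2\coth(sh_2)\bigr)},
\end{equation*}
with $s = 2\pi\normeuclide{\kfourier}$. Introduce $G(s) = s\rho_1\coth(sh_1)+s\rho_2\coth(sh_2)$, so that $v_p^2 = (\rho_2-\rho_1)g / G(s)$. The entire question reduces to showing that $G$ is a continuous, strictly increasing bijection from $(0,+\infty)$ onto $\bigl(\rho_1/h_1+\rho_2/h_2,\,+\infty\bigr)$; the existence and uniqueness of $\kfourier_c^\star$ will then follow from the intermediate value theorem applied to the continuous bijection $s\mapsto v_p(s)$ from $(0,+\infty)$ onto $(0,U_c)$.

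To prove the monotonicity of $G$, I would introduce the auxiliary function $F(u)=u\coth(u)$ for $u>0$ and observe that $G(s) = (\rho_1/h_1)F(sh_1)+(\rho_2/h_2)F(sh_2)$. Thus it suffices to show that $F$ is strictly increasing on $(0,+\infty)$. A direct computation gives
\begin{equation*}
F'(u) = \frac{\sinh(u)\cosh(u)-u}{\sinh^2(u)},
\end{equation*}
so I only need $\sinh(u)\cosh(u)>u$ on $(0,+\infty)$. Setting $h(u) = \sinh(u)\cosh(u)-u$, we have $h(0)=0$ and $h'(u) = \cosh^2(u)+\sinh^2(u)-1 = 2\sinh^2(u)>0$ for $u>0$, so $h>0$ on $(0,+\infty)$ and $F$ is strictly increasing.

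It then remains to compute the two limits of $G$. Since $F(u)\to 1$ as $u\to 0^+$, we obtain $G(s)\to \rho_1/h_1+\rho_2/h_2$ as $s\to 0^+$, so $v_p(s)\to U_c$ by definition \eqref{eq:Uc}. Since $\coth(u)\to 1$ as $u\to+\infty$, we get $G(s)\sim s(\rho_1+\rho_2)\to +\infty$, so $v_p(s)\to 0$. Combining, $v_p$ is a continuous, strictly decreasing bijection $(0,+\infty)\to (0,U_c)$. Consequently, if $0<\Ux<U_c$, a unique $\kfourier_c^\star>0$ satisfies $v_p(\kfourier_c^\star)=\Ux$; if $\Ux>U_c$, then $v_p(\kfourier)<U_c<\Ux$ for every $\kfourier>0$, so no such $\kfourier_c^\star$ exists. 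I expect the only non-routine step to be the monotonicity of $F(u)=u\coth(u)$; once this elementary fact is in hand, the rest is just bookkeeping of limits.
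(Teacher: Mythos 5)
Your proposal is correct and follows essentially the same route as the paper: both reduce to the strict monotonicity of $u\mapsto u\coth(u)$ via the decomposition $G(s)=(\rho_1/h_1)F(sh_1)+(\rho_2/h_2)F(sh_2)$ and the same limit computations at $0^+$ and $+\infty$. The only cosmetic difference is that you justify $\sinh(u)\cosh(u)>u$ by differentiating once more, whereas the paper simply invokes $\sinh(u)>u$ and $\cosh(u)>1$.
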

\begin{proof}Consider the function $P(s)$ defined  by
$$P(s)=\frac{\omega_s}{2\pi s}=\sqrt{\frac{(\rho_2-\rho_1)g}{2\pi s(T_{1,s}+T_{2,s})}},$$
where $s=\normeuclide{\kfourier}=|\kappa_x|>0$. We have 
$$P(s)=\sqrt{\frac{(\rho_2-\rho_1)g}{R(s)}},$$
where $R(s)=2\pi s(\rho_1\coth(2\pi s h_1)+\rho_2\coth(2\pi s h_2))$. The function $P(s)$ is continuous on $(0,+\infty)$ since $R$ is continuous and positive on $(0,+\infty)$. We claim that $R$ is stricly increasing on $(0,+\infty)$, so that $P$ is strictly decreasing on $(0,+\infty)$. To prove this claim, we note that $$R(s)=\frac{\rho_1}{h_1}E(2\pi s h_1)+\frac{\rho_2}{h_2} E(2\pi s h_2),$$
where $E(\sigma)=\sigma \coth(\sigma)$. The function $E$ is strictly increasing on $(0,+\infty)$ because its derivative reads
$$E'(\sigma)=\frac{\sinh(\sigma)\cosh(\sigma)-\sigma}{\sinh^2(\sigma)},$$
so that  $E'(\sigma)>0$ since $\sinh(\sigma)>\sigma$ and $\cosh(\sigma)> 1$   on $(0,+\infty)$.  Thus, $R$ is the sum of two strictly increasing functions and the claim is proved. 

Finally, we note that as $s\longrightarrow 0^+$, $R(s)$ tends to $\rho_1/h_1+\rho_2/h_2$, so that $P(s)\longrightarrow U_c$. As $s\longrightarrow +\infty$, we have $R(s)\longrightarrow +\infty$, so that $P(s)\longrightarrow 0$. Thus, $P$ is a bijection from $(0,+\infty)$ onto $(0,U_c)$ and this completes the proof. 
\end{proof}
Let us point out two consequences of Theorem~\ref{thm:oneD_wave} when $\varepsilon=0$. In the supercritical case, the stationary solution $\mudiff_{\kfourier,0}^\star$ defined by~\eqref{eq:mu_stationary} belongs to $L^2(\mathbb{R}^d)$ because the denominator $2\pi\kappa_x\Ux+\omega_{\kfourier,0}$ does not vanish for any $\kappa_x\not=0$. Consequently, there is no wake behind the ship in the stationary case. 

In contrast, in the subcritical regime, the denominator of $\mudiff_{\kfourier,0}^\star$ is zero for exactly one value of $\kappa_x^\star<0$ (note that $v_p(\kappa_x^\star)=v_p(-\kappa_x^\star)=\Ux$). Consequently, the stationary solution $\mudiff_{\kfourier,0}^\star$ does not belong to $L^2(\mathbb{R}^d)$ for a generic ship-function $f$.  The regularization parameter $\varepsilon>0$ is then needed to approximate the solution.  A 1D sinusoidal wake corresponding to the critical  wave number $\kappa_x^\star$ is observed behind the ship (see, e.g.~\cite{prihandono2021linear}). 

\subsection{Two-dimensional wave propagation}
\label{subsec:2D_theory}
In this section, we assume again that the speed of the ship is constant, i.e. $\bmU(t)=\Ux\bm{e_x}$, as in section~\ref{sec:const_U}. We  assume now that the waves propagate in two dimensions, i.e. $\bm{x}\in\mathbb{R}^2$ and $\kfourier=(\kappa_x,\kappa_y)\in\mathbb{R}^2$. We now set $\varepsilon=0$, hence $\omega_{\kfourier,\varepsilon}=\omega_{\kfourier}$, as defined in~\eqref{eq:omega_k}.

Let us consider the domain of simple waves traveling at the speed of the ship :
\begin{equation*}
    \mathcal{D} = \left\lbrace \kfourier = (\kappa_x, \kappa_y) \in \mathbb{R}^2 \text{ such that } 2 \pi \kappa_x U_x = \omega_{\kfourier} \text{ and } \kfourier \neq (0,0) \right\rbrace.
\end{equation*}
Let $\kfourier \in \mathcal{D}$ and a simple wave given by
\begin{align*}
    \exp {\left(2 i \pi \kfourier \cdot \bmx - i \omega_{\kfourier} t)\right)} & = \exp{\left(2 i \pi \kappa_x (x- U_x t) + 2 i \pi \kappa_y y\right)} \; \text{ as $\kfourier = (\kappa_x, \kappa_y) \in \mathcal{D}$} \\
        & = \exp{\left( 2 i \pi (\kappa_x x' + \kappa_y y') \right)}
\end{align*}
where $(x',y')=(x-U_x t, y)$ corresponds to the reference frame of the ship.
So, the wave follows the ship and is constant along a straight line given by $\kappa_x x' + \kappa_y y' = C$ (for all $C \in \mathbb{R}$ constant) and orming an angle $\varphi$ with the $\bm{e}_x-$axis. We complete the angle by symmetry and obtain
\begin{equation*}
    \varphi = \pm  \left\lbrace  
    \begin{array}{cc}
        \arctan \left( \tfrac{\kappa_x}{\kappa_y} \right) & \text{ if } \kappa_y \neq 0,  \\
        \tfrac{\pi}{2} & \text{ otherwise.}
    \end{array}
    \right.
\end{equation*}
We are interested in two distinct types of waves:
\begin{itemize}
    \item \textit{transverse waves} (i.e. $\kappa_y = 0$ and $\kappa_x \neq 0$) are perpendicular to the direction of movement;
    \item \textit{divergent waves} (i.e. $\kappa_y \neq 0$) are oblique relative to the $\bm{e}_x-$axis.
\end{itemize}
As observed in the following, existence of such waves is characterized by the critical velocity \eqref{eq:Uc}.
Thus, we want to solve the equation 
\begin{equation}
    \label{eq:kcritique2D}
2\pi \kappa_x\Ux=\omega_{\kfourier},
\end{equation}
for $\kappa_x>0$, in view of $\mathcal{D}$ and also
because it is directly related to the situation where the denominator of the stationary solution  $\mudiff_{\kfourier,0}^\star$ vanishes. 
 For every value of $\Ux>0$, we will see that there is a curve of solutions to~\eqref{eq:kcritique2D}, so that  $\mudiff_{\kfourier,0}^\star$ never belongs to $L^2(\mathbb{R}^2)$ (for a generic $f$). Consequently, there is always a wake behind the ship. This is a major difference with the 1D case. In Carusotto and Rousseaux~\cite{carusotto2013}, equations similar to~\eqref{eq:kcritique2D} have been thoroughly studied to gain some insight into the wave pattern.

Let $U_c$ be defined by~\eqref{eq:Uc}. We look for $\kfourier$ in polar coordinates $(r,\theta)$ with $r=\normeuclide{\kfourier}>0$. Up to symmetry, we remark that
\begin{equation*}
    \theta = \pm  \left\lbrace  
    \begin{array}{cc}
        \arctan \left( \tfrac{\kappa_y}{\kappa_x} \right) & \text{ if } \kappa_x \neq 0,  \\
        0 & \text{ else.}
    \end{array}
    \right.
\end{equation*}
so that $\varphi + \theta = \pm \tfrac{\pi}{2}$. The geometrical configuration and the different waves occurring are illustrated in Figure~\ref{fig:transverse_divergent_waves}.
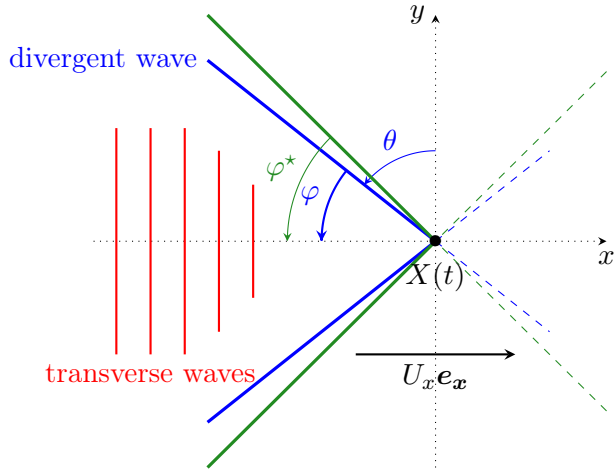
\begin{figure}[ht]
    \centering
    \begin{tikzpicture}[scale=1.5]
        \draw [dotted, , >=stealth,->](-3,0) -- (1.5,0);
        \draw [below](1.5,0) node{$x$};
        \draw [dotted, , >=stealth,->] (0,-2) -- (0,2);
        \draw [left](0,2) node{$y$};
        \draw [dashed, blue] (0,0) -- (1,.8);
        \draw [very thick, blue] (0,0) -- (-2,-1.6);
        \draw [blue, thick, domain=141.34:180,samples=100,>=stealth,->] plot ({cos(\x)}, {sin(\x)});
        \draw [blue] (-1.1,.4) node{$\varphi$};
        \draw [blue, domain=90:141.34,samples=100,>=stealth,->] plot ({.8*cos(\x)}, {.8*sin(\x)});
        \draw [blue] (-.4,.9) node{$\theta$};
        \draw [dashed, blue] (0,0) -- (1,-.8);
        \draw [very thick, blue] (0,0) -- (-2,1.6);
        \draw [left, blue] (-2,1.6) node{divergent wave};
        \draw [color=mygreen, very thick] (-2,-2) -- (0,0);
        \draw [color=mygreen, dashed] (1.5,-1.5) -- (0,0);
        \draw [color=mygreen, very thick] (-2,2) -- (0,0);
        \draw [color=mygreen, dashed] (1.5,1.5) -- (0,0);
        \draw [color=mygreen, domain=135:180,samples=100,>=stealth,->] plot ({1.3*cos(\x)}, {1.3*sin(\x)});
        \draw [color=mygreen] (-1.35,.65) node{$\varphi^{\star}$};
        \draw [thick, red] (-2.8,-1) -- (-2.8,1);
        \draw [thick, red] (-2.5,-1) -- (-2.5,1);
        \draw [thick, red] (-2.2,-1) -- (-2.2,1);
        \draw [thick, red] (-1.9,-.8) -- (-1.9,.8);
        \draw [thick, red] (-1.6,-.5) -- (-1.6,.5);
        \draw [below, red] (-2.5,-1) node{transverse waves};
        \draw (0,0) node{$\bullet$};
        \draw [below] (0,-.1) node{$X(t)$};
        \draw [thick, , >=stealth,->](-.7,-1) -- (.7,-1);
        \draw [below] (0,-1) node {$U_x \bm{e_x}$};
    \end{tikzpicture}
    \caption{Illustration of two dimensional wave propagation. The lines correspond to straight lines along which simple waves are constant. Blue ones form an angle $\varphi$  with the $x-$axis and correspond to divergent waves. Transverse waves (red lines) occur only in a subcritical configuration. In the supercritical case, divergent waves are contained within a cone with angle $\varphi^{\star}$ (green line).}
    \label{fig:transverse_divergent_waves}
\end{figure}

In the supercritical case $\Ux>U_c$, equation~\eqref{eq:kcritique2D} reads
\begin{equation*}
    2\pi r \Ux \cos\theta=\omega_{\kfourier}\iff \cos\theta=\frac{v_p(r)}{\Ux}\iff\theta=\pm\arccos \left(\frac{v_p(r)}{\Ux}\right),\quad r\in(0,+\infty),
\end{equation*}
where $v_p(\normeuclide{\kfourier})=\tfrac{\omega_{\kfourier}}{2\pi\normeuclide{\kfourier}}$ is the phase velocity. So $\sin \varphi = \sin ( \pm \pi/2 - \theta) = \cos \theta$ and we deduce (with symmetries):
\begin{equation*}
    \varphi = \pm \arcsin \left( \dfrac{v_p(r)}{U_x} \right).
\end{equation*} 
By Theorem~\ref{thm:oneD_wave}, $r \mapsto \tfrac{v_p(r)}{\Ux}$ is continuously and strictly decreasing from $U_c/\Ux<1$ to $0^+$ on the interval $(0,+\infty)$, so that $\theta$ exist and $r\mapsto\arccos(v_p(r)/\Ux)$ is strictly increasing from a minimum angle $\theta^\star=\arccos(U_c/\Ux)$ to $\pi/2$. So, simple waves are contained within a cone forming an angle 
\begin{equation}
\label{varphi_star}
    \varphi^{\star} = \pm \arcsin \left( \dfrac{U_c}{\Ux} \right).
\end{equation}
The expression $\theta=\theta(r)$ we found allows to draw $\mathcal{D}$ and the curve $r \mapsto \varphi(r)$, as illustrated in Figure~\ref{fig:supercritical2D}. 

\begin{figure}[ht]
\centerline{\includegraphics{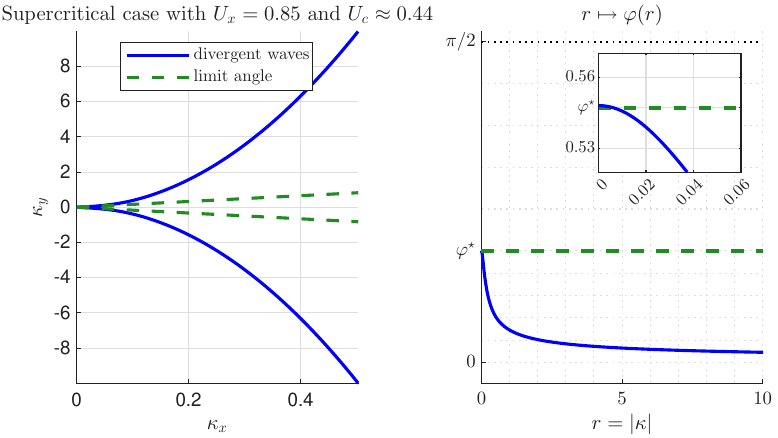}}
\caption{\label{fig:supercritical2D}Supercritical case  $\Ux>U_c$ using $\rho_1 = 999~\si{kg \cdot m^{-3}}$, $\rho_2 = 1022.3~\si{kg \cdot m^{-3}}$, $h_1 = 1~\si{m}$, $h_2 = 6~\si{m}$ and $g = 9.81~\si{m \cdot s^{-2}}$. Left: Domain $\mathcal{D}$, the line of singularities is the blue curve while the green dashed lines define the limit angle $\varphi^{\star}$. Right : curve $r \mapsto \varphi(r)$ is the blue line which is bounded by $0$ and $\varphi^{\star} \approx 0.5470$.}
\end{figure}
   
In the subcritical case $\Ux<U_c$, equation~\eqref{eq:kcritique2D} reads again $\cos\theta=v_p(r)/\Ux$. Since $v_p(r)/\Ux$ is continuously and strictly decreasing from $U_c/\Ux>1$ to $0^+$ on $(0,+\infty)$, there is a unique $r^\star>0$ such that $v_p(r^\star)=\Ux$ (cf. Theorem~\ref{thm:oneD_wave}). The value $r^{\star}$ corresponds to a transverse wave. For $r\in[r^\star,+\infty)$, the singularity curve is defined by
\begin{equation*}
    \theta(r)=\pm\arccos \left(\dfrac{v_p(r)}{\Ux}\right),\quad r\in[r^\star,+\infty).
\end{equation*}
In Figure~\ref{fig:subcritical2D}, we plot example of the domain $\mathcal{D}$ and $r \mapsto \varphi(r)$ in a subcritical configuration.

\begin{figure}[ht]
\centerline{\includegraphics{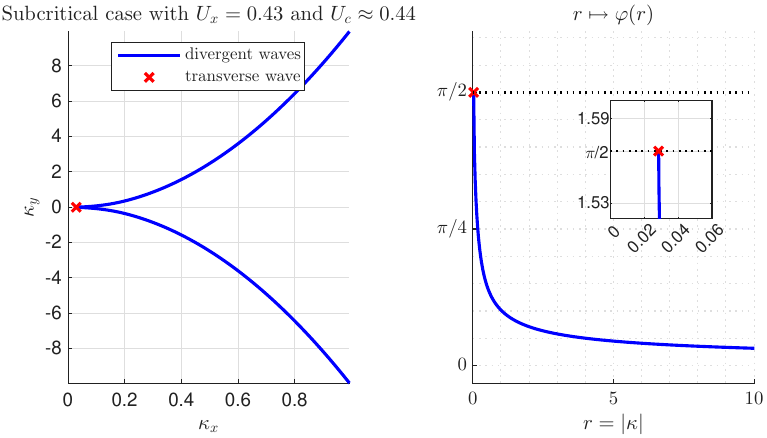}}
\caption{\label{fig:subcritical2D} Subcritical case  $\Ux<U_c$ using $\rho_1 = 999~\si{kg \cdot m^{-3}}$, $\rho_2 = 1022.3~\si{kg \cdot m^{-3}}$, $h_1 = 1~\si{m}$, $h_2 = 6~\si{m}$ and $g = 9.81~\si{m \cdot s^{-2}}$. Left: Domain $\mathcal{D}$ is plotted with the blue curve, the red cross corresponds to the transverse wave. Right : the curve $r \mapsto \varphi(r)$ corresponds to the blue line and the red cross to the transverse wave with $r^{\star} \approx 2.8589 \cdot 10^{-2}$ that depends on the chosen physical parameters.}
\end{figure}
   
\begin{remark}
    In the 2D case, it is also possible to find a limit for the stationary solution as $\varepsilon\to 0$, but the calculations are technical. In~\cite{Hudimac1961}, the Green function (that is, the solution for a point source) is computed for a closely related problem. However, Hudimac deals directly with the situation $\varepsilon=0$ and he handles the singular integrals with a Fourier single-integral limit theorem.
\end{remark}
\begin{remark}
    The approach considered allows to describe the wake pattern that will be observed in numerical simulations (see Section~\ref{sec:numan}). It has similarly been considered for single layer configurations. For instance in~\cite{rabaud2013ship, crawford1984elementary} the authors consider a geometric approach to describe the wake behind an object and a limit angle. 
\end{remark}

\section{Numerical analysis}
\label{sec:numan}
An explicit solution of~\eqref{eq:WaterWaves_viscous} is  given by the Proposition~\ref{prop:solUconst} if the velocity $\bmU$ is constant. However, the boat velocity is related to the physical context (engine speed, environmental resistance...). For this reason, in a general context, a numerical scheme is needed to solve~\eqref{eq:WaterWaves_viscous}. This is the topic of this section.

\subsection{Discrete Fourier transform}

Previously, the model~\eqref{eq:WaterWaves_viscous} was considered in an infinite domain $\Omega = \mathbb{R}^d$ using the Fourier transform. For numerical considerations, we replace $\Omega$ by a bounded periodic domain. In the one dimensional case, we now consider a domain with length $L_x > 0$ given by $\Omega=(-L_x/2;L_x/2)$. The two dimensional domain is $\Omega = (-L_x/2;L_x/2)\times (-L_y/2;L_y/2)$ and corresponds to a rectangle of dimensions $L_x \times L_y$ (with $L_x>0$ and $L_y>0$).

Due to periodicity, we can effectively handle boundary conditions and implement the numerical algorithms. The solution is evaluated on a periodic grid, and the Fast Fourier Transform (FFT) is used to analyze and manipulate the solution in the Fourier space, making the computation more efficient~\cite{jenkins2022fourier,ryan2019linear}. This approach transforms the problem from a infinite domain into a manageable finite one, where the periodicity implies that the solution repeats itself outside the boundaries of the computational domain.  Then numerical simulations should be done carefully to prevent the periodicity from deteriorating the solution by interacting with itself. For this reason, the simulations will be carried out with sufficiently large domains with respect to the final time.

\subsection{Time discretization}

In this subsection, we focus on the temporal discretization of the governing equations. Our goal is to compute $\mudiff_{\kfourier,\varepsilon}^{(k)} \approx \mudiff_{\kfourier,\varepsilon}(k \Delta t)$ with $\Delta t > 0$ the time step and $k \in \mathbb{N}$ the iterate.

A time integrator can be obtained by discretizing~\eqref{eq:WaterWaves_viscous}. However, discretizations using an explicit scheme are subject to time step constraints that make long-term simulations tricky. Conversely, some implicit methods do not have time step restrictions but suffer from poor dissipation and dispersion properties. For instance, the backward Euler method includes an extra dissipation which results in unexpected attenuation of the physical phenomena to be observed. Conversely, the Crank-Nicholson method does not preserve dispersion properties and waves are distorted from expected ones. To avoid both the time step constraints and poor dissipation and dispersion properties, we consider exponential integrators. A more complete comparison of time integrators to solve wave equation is available in~\cite{brachet2022comparison} in the context of non-dispersive PDEs. 

The exponential integrators we consider are obtained from~\eqref{eq:Duhamel} by considering quadrature rules on the integral. Formula~\eqref{eq:Duhamel} leads to the iterate relation
\begin{align}
    \mudiff_{\kfourier,\varepsilon}(t^{(k+1)}) & = \mudiff_{\kfourier,\varepsilon}(t^{(k)})e^{i \omega_{\kfourier,\varepsilon} \Delta t} - \displaystyle\int_{t^{(k)}}^{t^{(k+1)}} e^{i \omega_{\kfourier,\varepsilon}(t^{(k+1)} -\tau)} \hat{g}_{\kfourier}(\tau)d\tau \nonumber \\
        & = e^{i \omega_{\kfourier,\varepsilon} \Delta t} \left( \mudiff_{\kfourier,\varepsilon}(t^{(k)}) - \displaystyle\int_{0}^{\Delta t} e^{-i \omega_{\kfourier,\varepsilon}\tau} \hat{g}_{\kfourier}(\tau+t^{(k)})d\tau \right), \label{eq:iterationexacte}
\end{align}
where $t^{(k)}=k\Delta t$. 
Let  $\mathcal{Q}_{\kfourier,\Delta t}(t^{(k)})$ be a $p$-order accurate approximate value of the integral part such that
\begin{equation}
    \begin{vmatrix} \mathcal{Q}_{\kfourier,\Delta t}(t^{(k)}) - \displaystyle\int_{0}^{\Delta t} e^{-i \omega_{\kfourier,\varepsilon}\tau} \hat{g}_{\kfourier}(\tau+t^{(k)})d\tau \end{vmatrix} \leq C^{(p)}_{\kfourier} \Delta t^{p+1},
    \label{eq:quadraturep}
\end{equation}
where $C^{(p)}_{\kfourier}$ is a constant that is independent of $\Delta t$ on a finite time interval $[0,T]$.
Then, for all $k \in \mathbb{N}$, $\mudiff_{\kfourier,\varepsilon}^{(k)} \approx \mudiff_{\kfourier,\varepsilon}(k \Delta t)$ is computed with
\begin{equation}
    \label{eq:iterationquad}
    \mudiff_{\kfourier,\varepsilon}^{(k+1)} = e^{i \omega_{\kfourier,\varepsilon} \Delta t} \left( \mudiff_{\kfourier,\varepsilon}^{(k)} - \mathcal{Q}_{\kfourier,\Delta t}(t^{(k)}) \right).
\end{equation}
An estimate such as~\eqref{eq:quadraturep} can be obtained by a numerical integration of order $p$ if $t\mapsto \hat{g}_{\kfourier}(t)$ is of class $C^{p+1}$ on $[0,T]$. This happens if the velocity $\bmU$ is of class $C^{p+1}$ on $[0,T]$, by~\eqref{eq:defgk}. In that case, the constant $C^{(p)}_{\kfourier}$ depends on the derivatives of $\hat{g}_{\kfourier}$ up to order $p+1$ on $[0,T]$ and on $\varepsilon T$.

We deduce the following accuracy result.
\begin{proposition}
    \label{prop:accuracy}
    Let $\varepsilon\ge 0$, $T>0$ and assume that $\mudiff_{\kfourier,\varepsilon}^{(0)} = \mudiff_{\kfourier,\varepsilon}(0)$. Then for all $k \geq 0$, $\Delta t>0$ and $\kfourier$ such that $k\Delta t\le T$, we have
    \begin{equation*}
        \begin{vmatrix} \mudiff_{\kfourier,\varepsilon}^{(k)} - \mudiff_{\kfourier,\varepsilon}(k\Delta t) \end{vmatrix} \leq T C_{\kfourier}^{(p)} \Delta t^{p}.
    \end{equation*}
\end{proposition}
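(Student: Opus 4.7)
The plan is to prove the global error bound by a standard induction on $k$, combining the local quadrature error estimate~\eqref{eq:quadraturep} with the contractivity of the exponential factor $e^{i\omega_{\kfourier,\varepsilon}\Delta t}$. The starting point is to compare the exact one-step recurrence~\eqref{eq:iterationexacte} with the numerical scheme~\eqref{eq:iterationquad}, and to set up a telescoping recurrence on the pointwise error $e_{\kfourier}^{(k)}:=\mudiff_{\kfourier,\varepsilon}^{(k)}-\mudiff_{\kfourier,\varepsilon}(k\Delta t)$.

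First I would subtract~\eqref{eq:iterationquad} from~\eqref{eq:iterationexacte} to obtain
\begin{equation*}
e_{\kfourier}^{(k+1)} = e^{i\omega_{\kfourier,\varepsilon}\Delta t}\left(e_{\kfourier}^{(k)} - R_{\kfourier}^{(k)}\right),\qquad R_{\kfourier}^{(k)}:=\mathcal{Q}_{\kfourier,\Delta t}(t^{(k)}) - \int_{0}^{\Delta t} e^{-i\omega_{\kfourier,\varepsilon}\tau}\hat{g}_{\kfourier}(\tau+t^{(k)})d\tau.
\end{equation*}
The key observation is that $|e^{i\omega_{\kfourier,\varepsilon}\Delta t}|=e^{-\varepsilon\Delta t}\le 1$ because $\omega_{\kfourier,\varepsilon}=\omega_{\kfourier}+i\varepsilon$ with $\varepsilon\ge 0$. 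Together with the triangle inequality and the local truncation estimate~\eqref{eq:quadraturep}, this gives
\begin{equation*}
|e_{\kfourier}^{(k+1)}|\le |e_{\kfourier}^{(k)}| + |R_{\kfourier}^{(k)}|\le |e_{\kfourier}^{(k)}| + C_{\kfourier}^{(p)}\Delta t^{p+1}.
\end{equation*}

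Since $e_{\kfourier}^{(0)}=0$ by the assumption $\mudiff_{\kfourier,\varepsilon}^{(0)} = \mudiff_{\kfourier,\varepsilon}(0)$, an immediate induction on $k$ yields $|e_{\kfourier}^{(k)}|\le k\,C_{\kfourier}^{(p)}\Delta t^{p+1}$. Finally, using the constraint $k\Delta t\le T$, we bound $k\Delta t^{p+1} = (k\Delta t)\,\Delta t^{p}\le T\,\Delta t^{p}$, which gives the announced estimate.

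There is no real obstacle here: the argument is a textbook stability-plus-consistency argument, whose only subtle point is the use of the non-expansivity of the exponential propagator, which holds uniformly in $\varepsilon\ge 0$ (so the bound degrades gracefully as $\varepsilon\to 0$). The constant $C_{\kfourier}^{(p)}$ remains $\kfourier$-dependent, which is consistent with the pointwise-in-$\kfourier$ nature of the statement; a norm-based estimate in $L^2(\Omega)$ would require further assumptions on the regularity of $f$ and on the integrability of $\kfourier\mapsto C_{\kfourier}^{(p)}$, but this is not needed for the present proposition.
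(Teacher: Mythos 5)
Your proof is correct and follows essentially the same route as the paper's: compare the exact recurrence~\eqref{eq:iterationexacte} with the scheme~\eqref{eq:iterationquad}, use $|e^{i\omega_{\kfourier,\varepsilon}\Delta t}|=e^{-\varepsilon\Delta t}\le 1$ together with the local estimate~\eqref{eq:quadraturep} to get the one-step error inequality, and conclude by telescoping with $k\Delta t\le T$. The only cosmetic difference is that you write the exact error recurrence before passing to absolute values, whereas the paper states the inequality directly.
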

\begin{proof}
Let $\ell \geq 0$ such that $(\ell+1)\Delta t\le T$. Considering~\eqref{eq:iterationexacte} and~\eqref{eq:iterationquad}, by difference, we get
\begin{align*}
    \begin{vmatrix} \mudiff_{\kfourier,\varepsilon}^{(\ell+1)} - \mudiff_{\kfourier,\varepsilon}((\ell+1) \Delta t) \end{vmatrix} & \leq e^{-\varepsilon \Delta t} \begin{vmatrix} \mudiff_{\kfourier,\varepsilon}^{(\ell)} - \mudiff_{\kfourier,\varepsilon}(\ell \Delta t) \end{vmatrix} + C_{\kfourier}^{(p)} \Delta t^{p+1} \\
        & \leq \begin{vmatrix} \mudiff_{\kfourier,\varepsilon}^{(\ell)} - \mudiff_{\kfourier,\varepsilon}(\ell \Delta t) \end{vmatrix} + C^{(p)}_{\kfourier} \Delta t^{p+1}.
\end{align*}
So we obtain $\begin{vmatrix} \mudiff_{\kfourier,\varepsilon}^{(\ell+1)} - \mudiff_{\kfourier,\varepsilon}((\ell+1) \Delta t) \end{vmatrix} - \begin{vmatrix} \mudiff_{\kfourier,\varepsilon}^{(\ell)} - \mudiff_{\kfourier,\varepsilon}(\ell \Delta t) \end{vmatrix} \leq C^{(p)}_{\kfourier} \Delta t^{p+1}$ whatever $\ell \geq 0$.
Reminding that $\mudiff_{\kfourier,\varepsilon}^{(0)} = \mudiff_{\kfourier,\varepsilon}(0)$ and thanks to a telescoping sum, for $k\Delta t\le T$ we get
\begin{align*}
      \begin{vmatrix}
        \mudiff_{\kfourier,\varepsilon}^{(k)} - \mudiff_{\kfourier,\varepsilon}(k \Delta t) 
      \end{vmatrix} & = \displaystyle\sum_{\ell = 1}^{k} \begin{vmatrix} \mudiff_{\kfourier,\varepsilon}^{(\ell)} - \mudiff_{\kfourier,\varepsilon}((\ell) \Delta t) \end{vmatrix} - \begin{vmatrix} \mudiff_{\kfourier,\varepsilon}^{(\ell-1)} - \mudiff_{\kfourier,\varepsilon}((\ell-1) \Delta t) \end{vmatrix} \\
        & \leq \displaystyle\sum_{\ell = 1}^{k} C^{(p)}_{\kfourier} \Delta t^{p+1} \\
        & \le k\Delta t C_{\kfourier}^{(p)} \Delta t^{p}.
\end{align*}
 The claim follows.
\end{proof}

\begin{remark}
Proposition~\ref{prop:accuracy} gives a frequency-by-frequency estimate of the error. Numerically, we work with a finite number of frequencies $\kfourier$ related to the number of grid points. Then, Parseval's identity gives the following estimate:
\begin{equation*}
        \begin{Vmatrix} \mu_{\varepsilon}^{(k)} - \mu_{\varepsilon}(k\Delta t) \end{Vmatrix}_{\ell^2} \leq t^{(k)} C^{\sfrac{1}{2}} \Delta t^{p}
    \end{equation*}
    where $C = \sum_{\kfourier} C_{\kfourier}^{(p)}$ is a finite sum. However, in this context, an error can occur due to high frequencies which are truncated. These components are damped by the diffusion parameter $\varepsilon>0$.
\end{remark}

Several quadrature rules can be used to compute $\mathcal{Q}_{\kfourier,\Delta t}(t^{(k)})$. For illustration, we mention
\begin{itemize}
    \item the trapezoidal rule :
    \begin{equation*}
        \mathcal{Q}_{\kfourier,\Delta t}(t^{(k)}) = \dfrac{\Delta t}{2} \left( \hat{g}_{\kfourier}(t^{(k)}) +  e^{- i \omega_{\kfourier,\varepsilon} \Delta t}\hat{g}_{\kfourier}(t^{(k+1)})  \right)
    \end{equation*}
    which is second order accurate;
    \item the fourth order accurate Simpson rule :
    \begin{equation*}
        \mathcal{Q}_{\kfourier,\Delta t}(t^{(k)}) = \dfrac{\Delta t}{6} \left( \hat{g}_{\kfourier}(t^{(k)}) +  4e^{- i \omega_{\kfourier,\varepsilon} \sfrac{\Delta t}{2}}\hat{g}_{\kfourier}(t^{(k+\sfrac{1}{2})}) +  e^{- i \omega_{\kfourier,\varepsilon} \Delta t}\hat{g}_{\kfourier}(t^{(k+1)})  \right).
    \end{equation*}
\end{itemize}
The use of one of the previous formulas requires to compute $\hat{g}_{\kfourier}(t)$ for $t > t^{(k)}$. More precisely, we need to know the speed of the boat $\bm{U}(t)$ for some $t > t^{(k)}$. In this paper, the velocity is assumed known but in a more physical context, it could be given by an other equation. For this reason, we restrict our simulations to the rectangle rule $\mathcal{Q}_{\kfourier,\Delta t}(t^{(k)}) = \Delta t \hat{g}_{\kfourier}(t^{(k)})$. So, the sequence $(\mudiff_{\kfourier,\varepsilon}^{(k)})$ is computed with
\begin{equation}
    \mudiff_{\kfourier,\varepsilon}^{(k+1)} = e^{i \omega_{\kfourier,\varepsilon}} \left( \mudiff_{\kfourier,\varepsilon}^{(k)} - \Delta t \hat{g}_{\kfourier}(t^{(k)}) \right).
    \label{eq:ERK1_WW}
\end{equation}
This time scheme is first order accurate and the velocity should be known only at the current time $t^{(k)}$ to compute the next iterate.

The fact that the error is controlled by Proposition~\ref{prop:accuracy} does not mean that the method is stable in the sense of the evolution of the errors introduced at a given time (e.g. $t=0$). To do this, we consider two initial conditions $\mudiff_{\kfourier,\varepsilon}^{1}$ and $\mudiff_{\kfourier,\varepsilon}^{2}$ and analyze the way $\tilde{\mu}_{\kfourier,\varepsilon} = \mudiff_{\kfourier,\varepsilon}^{1} - \mudiff_{\kfourier,\varepsilon}^{2}$ evolves with time. In continuous time, we have
\begin{equation*}
    \tilde{\mu}_{\kfourier,\varepsilon}(t) = e^{i \omega_{\kfourier,\varepsilon} t} \tilde{\mu}_{\kfourier,\varepsilon}(0).
\end{equation*}
We deduce the continuous $L^2$ dissipation-stability result : $\begin{Vmatrix} \tilde{\mu}_{\varepsilon}(t) \end{Vmatrix}_{L^2} = e^{- \varepsilon t} \begin{Vmatrix} \tilde{\mu}_{\varepsilon}(0) \end{Vmatrix}_{L^2}$. This means the difference of two initial functions (error initially introduced) goes to zero when $t$ increases. About the dispersion-stability, we have $\arg \left( \tilde{\mu}_{\kfourier,\varepsilon}(t) \right) = \omega_{\kfourier} t$ which prescribes the way in which each frequency component evolves.
This properties are exactly recovered by using exponential integrator (whatever the quadrature rule $\mathcal{Q}_{\kfourier, \Delta t}$ used). Indeed, we have the discrete property $\tilde{\mu}_{\kfourier,\varepsilon}^{(k+1)} = e^{i \omega_{\kfourier,\varepsilon}} \tilde{\mu}_{\kfourier,\varepsilon}^{(k)}$
for all $k \in \mathbb{N}$. So we deduce:
\begin{align*}
    \begin{Vmatrix} \tilde{\mu}_{\varepsilon}^{(k)} \end{Vmatrix}_{L^2} & = e^{- \varepsilon k \Delta t} \begin{Vmatrix} \tilde{\mu}_{\varepsilon}(0) \end{Vmatrix}_{L^2}, \\
    \arg \left( \tilde{\mu}_{\kfourier,\varepsilon}^{(k)} \right) & = \omega_{\kfourier} k \Delta t.
\end{align*}
The exponential integrator preserves dissipation and dispersion errors initially introduced in $\tilde{\mu}_{\kfourier,\varepsilon}(0)$ in the same way as the continuous equation. This is not the case when using the forward Euler or the Crank-Nicholson time schemes for which extra-dissipation and distortions are introduced. 

\section{Numerical experiments}

In this section, we present numerical simulations in one and two space dimensions  to examine the consistency between theoretical and numerical results.

To conduct experiments, we consider a fluid domain made up of two layers. The top layer has a thickness of $h_1 = 1~\si{m}$ and a density $\rho_1 = 999~\si{kg \cdot m^{-3}}$ which could corresponds to fresh water at ambient temperature. Otherwise, the bottom layer's thickness is $h_2 = 6~\si{m}$ and it is composed of salty water of density $\rho_2 = 1022.3~\si{kg \cdot m^{-3}}$. As $g = 9.81~\si{m \cdot s^{-2}}$, the critical velocity~\eqref{eq:Uc} is $U_c \approx 0.4421~\si{m \cdot s^{-1}}$.
Simulations are conducted in a domain with of dimension $L_x$ and $L_y$. Note that $L_y$ is only used for two dimensional simulations.
The size of the domain will be specified in numerical experiments.

The boat's function is 
\begin{equation*}
    f(\bm{x}) = \left\lbrace
    \begin{array}{rl}
        - T \exp \left( -18 \left( \tfrac{x}{L_1} \right)^2 \right) & \text{for 1D experiments,} \\
        - T \exp \left( -18 \left( \left( \tfrac{x}{L_1} \right)^2+ \left( \tfrac{y}{L_2} \right)^2 \right) \right) & \text{for 2D experiments}.
    \end{array}
    \right.
\end{equation*}
where $L_1$ and $L_2$ are the length and the beam of the boat. The value $T$ is the boat's draft. In experiments, we consider $L_1 = L_2 = 10~\si{m}$ and $T = 0.02~\si{m}$. The boat sails in the direction of the $x-$axis at a speed given by $\bmU(t) = \Ux(t) \bm{e_x}$ where $\Ux(t)$ is specified in experiments.

For simulations where a non-zero value $\varepsilon$ is required, we compute $\varepsilon$ as small as possible using the strategy detailed in Appendix~\ref{app:exponentialintegrator}.

\begin{remark}
When the velocity $\bmU$ is constant, an analytical solution is provided by Proposition~\ref{prop:solUconst}. In particular, when the initial interface $\mu_{\varepsilon}(0)$ is equal to the steady state $\mu_{\varepsilon,0}^\star$, the deformation remains constant  over time (in the boat's frame of reference) and an analytical solution is also available, cf.~\eqref{eq:initial_steady}. However, there is no analytical solution in general if the velocity $\bmU$ is time dependent, as it is the case in a more realistic context. For the sake of clarity, the  solution computed at time $t^{N} = N \Delta t$ using a numerical scheme is denoted by $\eta_\varepsilon^N$ while $\eta_\varepsilon(t^{N})$ is the analytical solution (if available). Note that $\eta_\varepsilon(t^{N})$ is computed with a FFT for the space discretization.
\end{remark}

\subsection{One dimensional simulations}

We start with one dimensional simulations to analyze the numerical  properties of the scheme and the  behaviour of the internal waves.

\begin{example}
\label{ex:converence}
The first numerical experiment  is performed to validate the scheme by analyzing the convergence order of the numerical scheme, assuming that the boat moves at a constant speed. The velocity is given by $\Ux(t)  = 0.43~\si{m \cdot s^{-1}}$, and the initial condition is taken as $\mu_{\varepsilon}(0)=\mu_{\varepsilon,0}^\star$ (see~\eqref{eq:initial_steady}), so that $\mu_{\varepsilon}$ is the steady state solution $\mu_{\varepsilon}^\star$ (in the boat's referential). 
The length of the domain is $L_x = 12 \ 000~\si{m}$. The simulation runs over a final time of $t^N = 4000~\si{s}$, using a refined spatial discretization ($N_x = 15000$ points, corresponding to $\Delta x = 0.8~\si{m}$). Different time steps $\Delta t$ are used to examine temporal convergence. For each configuration, the relative error in the $\ell^2$ norm between the numerical solution and the corresponding theoretical reference from Proposition~\ref{prop:solUconst} is computed.

Figure~\ref{fig:convergence_avec_epsilon} illustrates the evolution of the relative $\ell^2$ error $\tfrac{\| \eta_{\varepsilon}(t^N) - \eta_{\varepsilon}^N \|_{\ell^2}}{\| \eta_{\varepsilon}(t^N) \|_{\ell^2}}$ between the numerical and theoretical solutions as the time step  $\Delta t$ is progressively decreased. Several values $\varepsilon$ are used. 
\begin{figure}[H]
    \centering
    \includegraphics[scale=1]{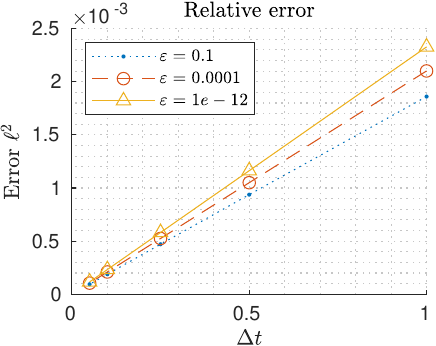}
  \caption{Example~\ref{ex:converence}. Relative $\ell^2$ error $\tfrac{\| \eta_{\varepsilon}(t^N) - \eta_{\varepsilon}^N \|_{\ell^2}}{\| \eta_{\varepsilon}(t^N) \|_{\ell^2}}$ between the numerical and analytical solutions for various values of $\varepsilon \in \{ 10^{-12}, 10^{-4}, 10^{-1} \}$ in the subcritical regime.}
    \label{fig:convergence_avec_epsilon}
\end{figure}
The results confirm the first order of accuracy expected from Proposition~\ref{prop:accuracy}. Using least square approximations, with $\varepsilon = 0.1$, the  estimated order is  approximately to $0.99522$, while for $\varepsilon = 0.0001$ it reaches $1.00029$ and it becomes  $1.00032$ with $\varepsilon = 10^{-12}$. 
\end{example}
\begin{example}
\label{ex:rayleigh}
In this simulation, we analyze the effect of the parameter $\varepsilon$. The boat moves at a constant velocity $U_x(t) = 0.43~\si{m \cdot s^{-1}}$ and the initial condition is given as previously by $\mu_{\varepsilon,0}^\star$. The domain has a total length $L_x = 6000~\si{m}$ and it is discretized using $N_x = 15  000$ points, yielding a spatial step $\Delta x = 0.4~\si{m}$. For each tested value of $\varepsilon$, the interface deformation $\eta_\varepsilon(t^N)$ is analyzed at the final time $t^N = 2000~\si{s}$.
\end{example}
The interface of the analytical solution $\eta_{\varepsilon}(t^N)$ is plotted on Figure~\ref{fig:tests_epsilon_mu0_0_h} with three different values of the regularization parameter $\varepsilon$. 
\begin{figure}[H]
    \centering
    \includegraphics[scale=1]{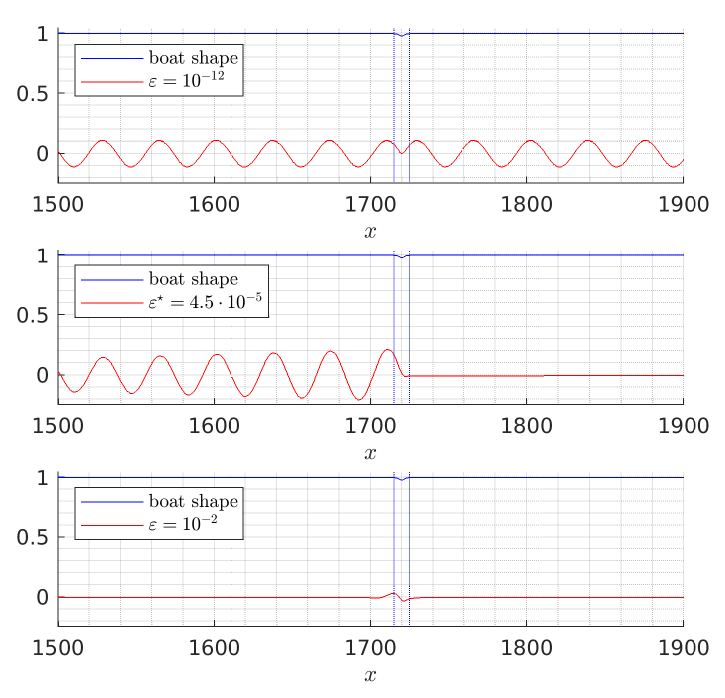}
    \caption{Example~\ref{ex:rayleigh}. Comparison of interface profiles for different regularization values of $\varepsilon$ at final time $t^N = 2000~\si{s}$ in a subcritical regime. The steady state solution is computed with the analytical expression.}
    \label{fig:tests_epsilon_mu0_0_h}
\end{figure}
For $\varepsilon = 10^{-12}$, we observe non-physical oscillations in front of the boat resulting from the periodicity of the domain. It can be reduced using a larger value $\varepsilon$. In contrast, the solution for $\varepsilon = 10^{-2}$ leads to a smooth and stable interface as expected but the regularization is too strong and the wake is immediately dissipated behind the boat. A compromise is obtained with Algorithm~\ref{alg:varepsilon} (cf. Appendix~\ref{app:exponentialintegrator}) and the value $\varepsilon^* = 4.5 \times 10^{-5}$ yields a deformation that is both smooth and stable while preserving the expected wake. This comparison highlights the importance of selecting an appropriate $\varepsilon$ to preserve the physical structure of the interface while avoiding periodicity artifacts.

\begin{example}
\label{ex:waves1d} 
The third experiment is an illustration of the supercritical and subcritical regimes highlighted in Theorem~\ref{thm:oneD_wave}. The interface is initially considered to be at rest with $\eta_{\varepsilon}(0) = 0$ and then disturbed by the movement of the boat. The damping parameter is $\varepsilon = 10^{-12}$. The velocities\footnote{We consider this kind of velocity to incorporate a progressive acceleration of the boat. With the parameters chosen, the speed reaches $99\%$ of $U_{\infty}$ at $t\approx 460.52~\si{s}$ and is close to be a constant thereafter.} are given by $U_x(t) = U_{\infty}(1-e^{-0.01 t})$ with $U_{\infty} = 0.25~\si{m \cdot s^{-1}}$ (subcritical) or $U_{\infty} = 0.65~\si{m \cdot s^{-1}}$ (supercritical). 
\end{example}
In Figure~\ref{fig:subcritical1D}, we display the numerical solution $\eta_\varepsilon^N$  at $t^N = 4000~\si{s}$ in the subcritical case. We observe a simple wave behind the boat that corresponds to the critical frequency $\kfourier_c$ introduced in Theorem~\ref{thm:oneD_wave}. It travels at the same speed as the boat without deforming. In addition, there is an over-elevation of the interface under the ship,  sometimes called Nansen's wake~\cite{fourdrinoy2020dual}. We also observe two local waves which are moving faster than the boat: one at the front of the boat and the other at the back in the opposite direction. These seem to be moving at the same velocity and are visible at $x \approx \pm 1750~\si{m}$.
\begin{figure}[H]
    \centering
    \includegraphics{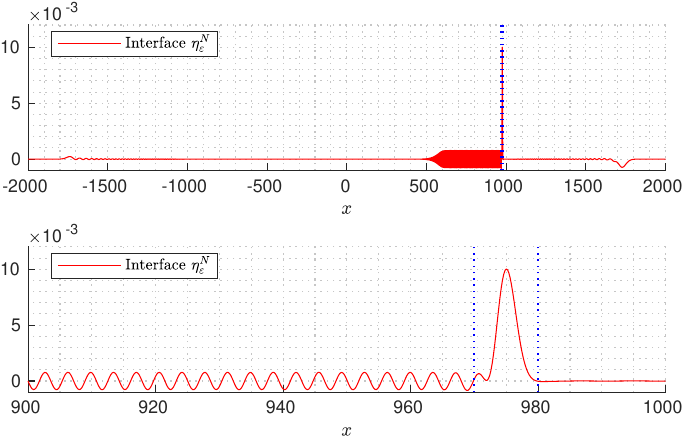}
   \caption{Example~\ref{ex:waves1d}. Interface deformation $\eta_{\varepsilon}^N$ at final time in the subcritical regime. The spatial domain has total length $L_x = 4000~\si{m}$ and is discretized using $N_x = 20  000$ points, corresponding to a spatial step of $\Delta x = 0.2~\si{m}$. The time step is $\Delta t = 0.5~\si{s}$.}
    \label{fig:subcritical1D}
\end{figure}

A similar simulation is conducted in the supercritical case and the corresponding interface $\eta_{\varepsilon}^N$ at time $t^N = 4000~\si{s}$ is shown on Figure~\ref{fig:supercritical1D}. Since for all $t$ large enough we have $U_x(t) > U_c$, Theorem~\ref{thm:oneD_wave} predicts that all waves are slower than the boat as it is observed in Figure~\ref{fig:supercritical1D}. There is also a deformation of the interface right below the ship which can be interpreted as Nansen's wake in the supercritical case.
\begin{figure}[H]
    \centering
    \includegraphics{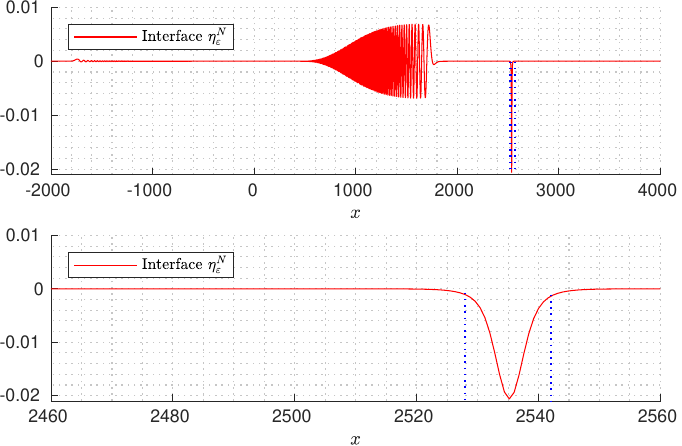}
  \caption{ Example~\ref{ex:waves1d}. Interface deformation $\eta_{\varepsilon}^N$ at final time in the supercritical regime. The simulation is conducted on a spatial domain of total length $L_x = 12  000~\si{m}$ with $N_x = 15  000$ points, yielding a spatial resolution of $\Delta x = 0.8~\si{m}$. The time step is set to $\Delta t = 0.5~\si{s}$. }
    \label{fig:supercritical1D}
\end{figure}

\begin{example}
\label{ex:waves1d-2} 
This experiment is similar to the one in Example~\ref{ex:waves1d}. But this time, we  analyze the supercritical and subcritical regimes using a space-time Fourier transform. The initial state is $\eta_{\varepsilon}(0) = 0$ and we consider $U_x(t) = U_{\infty}(1-e^{-0.01 t})$ with $U_{\infty} = 0.25~\si{m \cdot s^{-1}}$ (subcritical) or $U_{\infty} = 0.65~\si{m \cdot s^{-1}}$ (supercritical). The damping parameter is $\varepsilon = 10^{-12}$.
\end{example}

Figure~\ref{fig:double_fourier_vit_variabl12} is the double $(x,t)$-Fourier transform of $(x,t) \mapsto \eta_{\varepsilon}(x,t)$, where $\eta_{\varepsilon}$ is computed numerically at time $t^N = 4000~\si{s}$. The domain has a total length $L_x = 12 000~\si{m}$ and is discretised using $N_x = 15 000$ points, yielding a spatial resolution of $\Delta x = 0.8~\si{m}$. The time step is set to $\Delta t = 0.5~\si{s}$. The results highlight the dispersion relation $\omega_{\kfourier,\varepsilon}$ (dashed blue line) and the maximum boat velocity $2 \pi \kfourier U_{\infty}$ (dashed red line).
\begin{figure}[H]
    \centering
        \includegraphics[scale=.72]{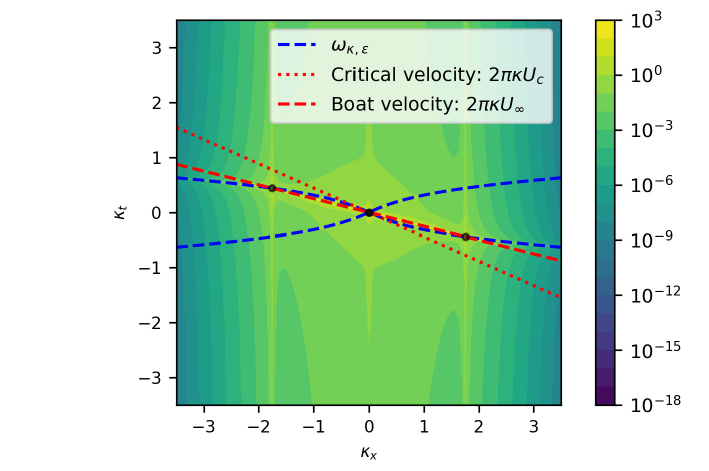}
        \includegraphics[scale=.72]{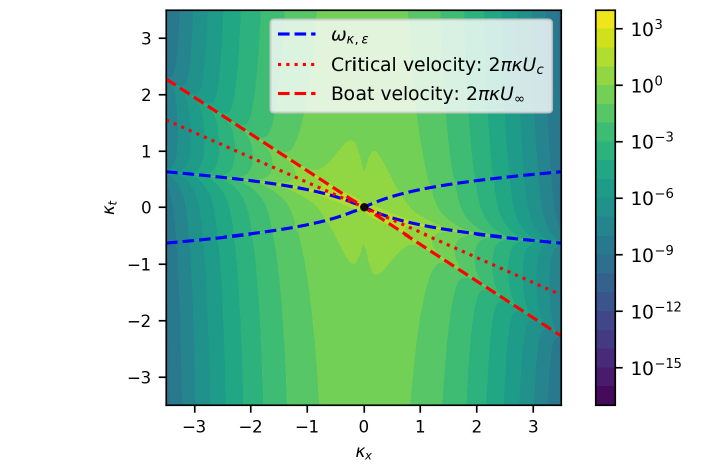} 
    \caption{Example~\ref{ex:waves1d-2}. Space-time Fourier transform of the internal wave.  On the left, the subcritical regime; on the right, the supercritical regime }
   \label{fig:double_fourier_vit_variabl12}
\end{figure}
The left-hand side of Figure~\ref{fig:double_fourier_vit_variabl12} corresponds to the subcritical regime. We observe that the line corresponding to the maximum boat velocity and the curve of the dispersion relation intersect at the abscissas $-\kfourier_c^{\star}$, $0$, and $\kfourier_c^{\star}$, as predicted by Theorem~\ref{thm:oneD_wave}. It means that some waves move faster, slower, or at the same velocity as $U_{\infty}$. 
In contrast, the right-hand side of Figure~\ref{fig:double_fourier_vit_variabl12} corresponds to the supercritical case. In this case, there is only one point of intersection at $0$. 
It is interesting to note that similar curves are obtained in \cite[Figure 2]{fourdrinoy2020dual} in an experimental context.
\begin{figure}[H]
    \centering
    \includegraphics[scale=0.4]{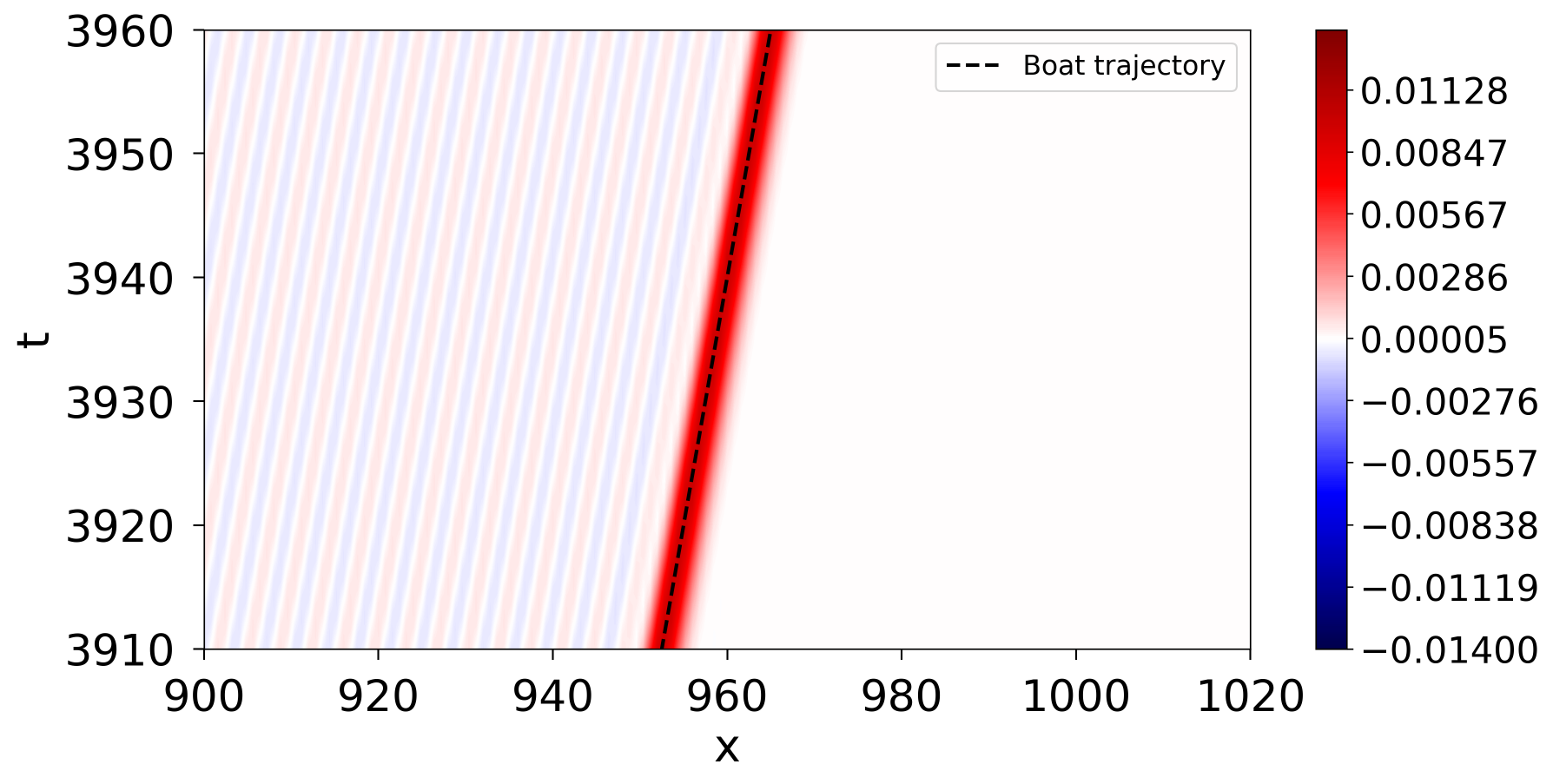}
    \caption{Example~\ref{ex:waves1d-2}. Interface deformation $\eta_{\varepsilon}^N$ in the subcritical regime, shown in the space-time domain.}
    \label{fig:subcritical1D_time}
\end{figure}
\FloatBarrier
Figure~\ref{fig:subcritical1D_time} shows the interface deformation over time in the subcritical case with variable velocity. The boat trajectory (black dashed line) is indicated, and oblique bands can be observed that represent the propagation of internal waves, mostly confined to the wake. It corresponds to the characteristic lines and confirms that a wave is moving with the same velocity than the boat.

\subsection{Two dimensional simulations}

This section deals with two dimensional simulation.  We consider the following experiments.

\begin{example}
\label{ex:2Dwaves}
This numerical experiment analyzes the evolution of the interface deformation in a two-dimensional configuration. The simulations are performed in both subcritical and supercritical regimes, with the boat speed set to   $U_x(t) = U_{\infty}(1-e^{-0.01 t})$, where $U_{\infty} = 0.43~\si{m \cdot s^{-1}}$ (subcritical) and $U_{\infty} = 0.85~\si{m \cdot s^{-1}}$ (supercritical). The damping parameter is fixed at  $\varepsilon = 10^{-12}$ in  both cases. These simulations are initialized with $\eta_{\varepsilon}(0)=0$ and performed using a time-dependent boat velocity.
\end{example}
We display the numerical solutions $\eta_{\varepsilon}^N$ at time $t^N = 2500~\si{s}$ in the subcritical and supercritical regimes. These simulations are performed on a spatial domain of length $L_x = 8000~\si{m}$ with $N_x = 12288$, 
giving a spatial resolution of $\Delta x = 0.65~\si{m}$. 
The domain in the $y$-direction has length $L_y = 2400~\si{m}$ with $N_y = 3072$, 
leading to a resolution of $\Delta y = 0.78~\si{m}$. 
The time step is $\Delta t = 0.5~\si{s}$.
\begin{figure}[ht]
    \centering
    \begin{minipage}[t]{0.48\textwidth}
        \centering
        \includegraphics[height=4.5cm]{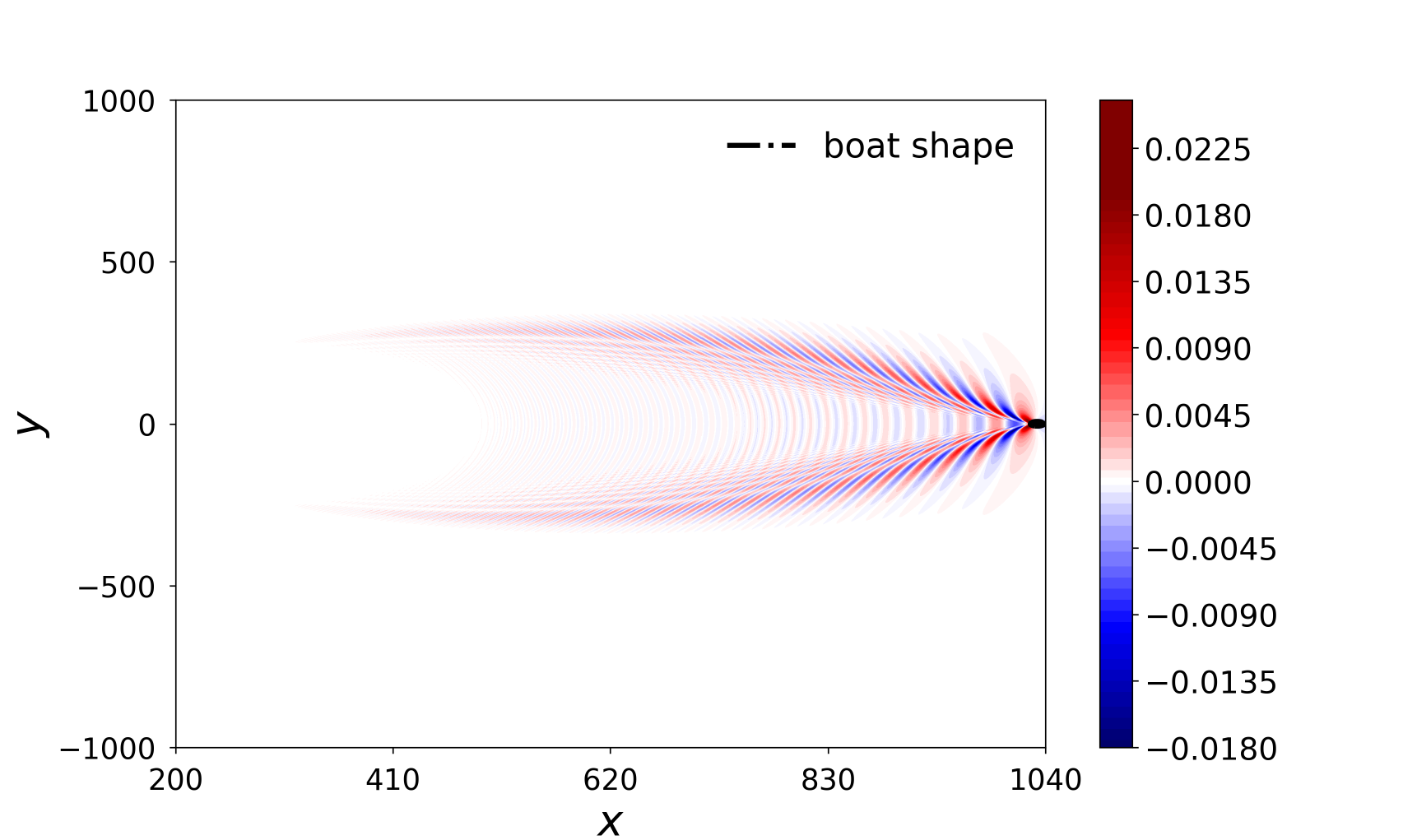}
    \end{minipage}
    \hfill
    \begin{minipage}[t]{0.48\textwidth}
        \centering
        \includegraphics[height=4.5cm]{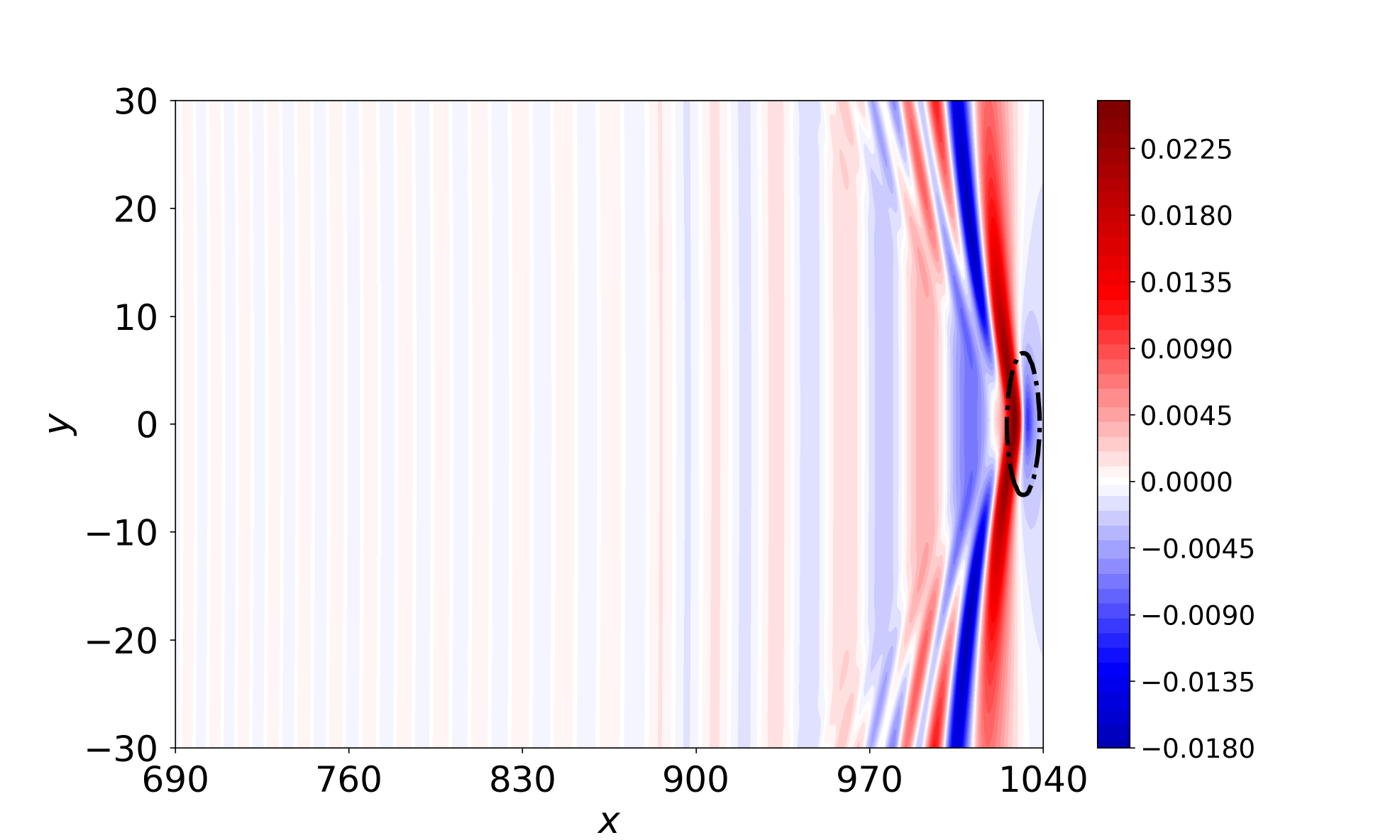}
    \end{minipage}
  \caption{Example~\ref{ex:2Dwaves}. Interface deformation  $\eta_{\varepsilon}^N$ at the final time  $t^N = 2500~\si{s}$ in the subcritical regime: the full two-dimensional configuration is shown on the left, while a zoomed view around the boat is displayed on the right.}
   \label{fig:cas_sous_crit_2dmu0est0}
\end{figure}
\FloatBarrier
\vspace{0.5em}
Figure~\ref{fig:cas_sous_crit_2dmu0est0} illustrates the interface deformation at the final time in the subcritical regime. The two-dimensional view (left) shows a well-structured wake developing behind the boat, whereas the interface remains regular in front of it. 
The zoomed-in view (right) highlights this behavior and shows the high amplitudes of the deformation concentrated in the region close to the vessel. We observe transverse waves (vertical lines) and divergent waves (oblique lines) as predicted in section~\ref{subsec:2D_theory}.
\begin{figure}[ht]
    \centering
    \begin{minipage}[t]{0.48\textwidth}
        \centering
        \includegraphics[height=4.5cm]{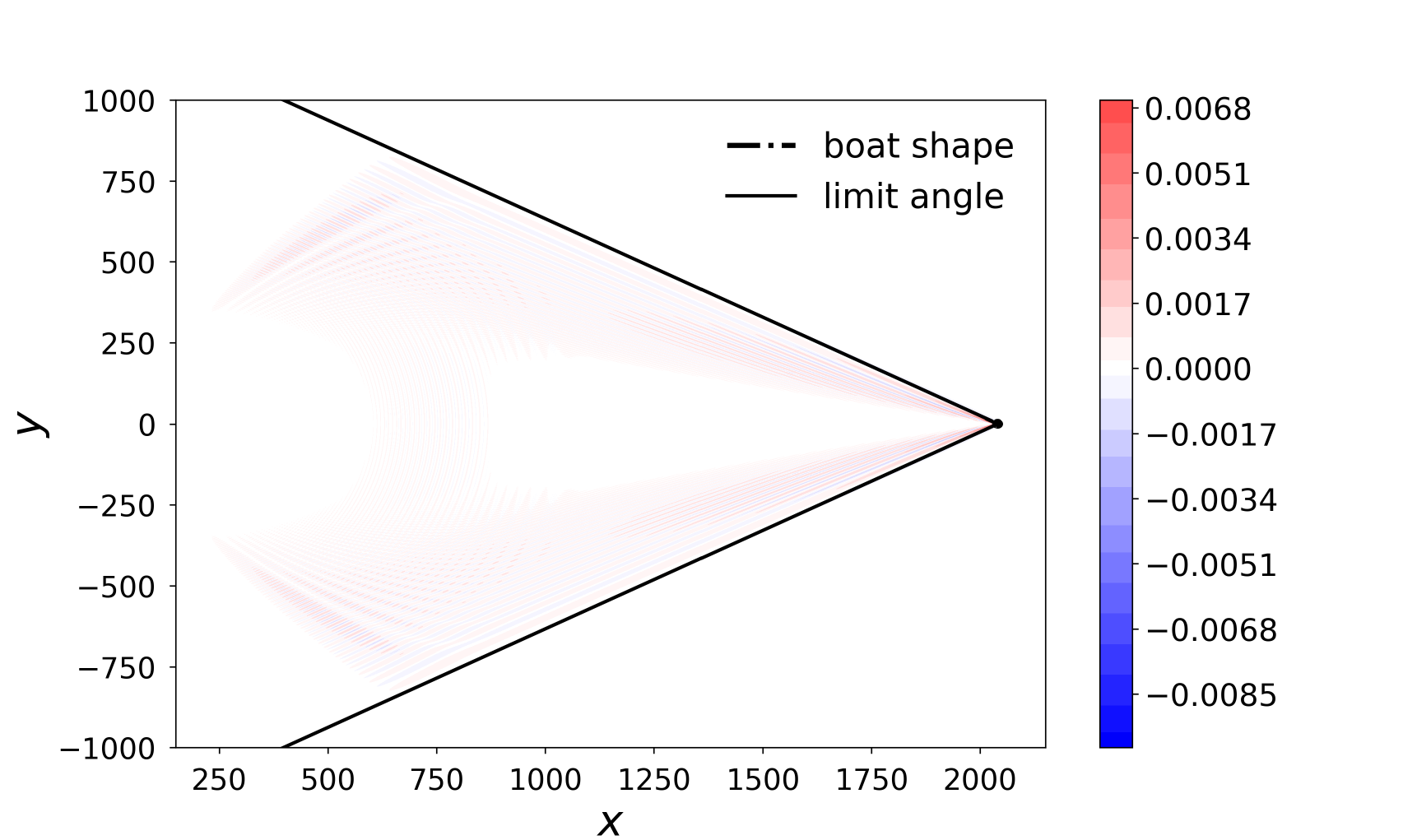}
    \end{minipage}
    \hfill
    \begin{minipage}[t]{0.48\textwidth}
        \centering
        \includegraphics[height=4.5cm]{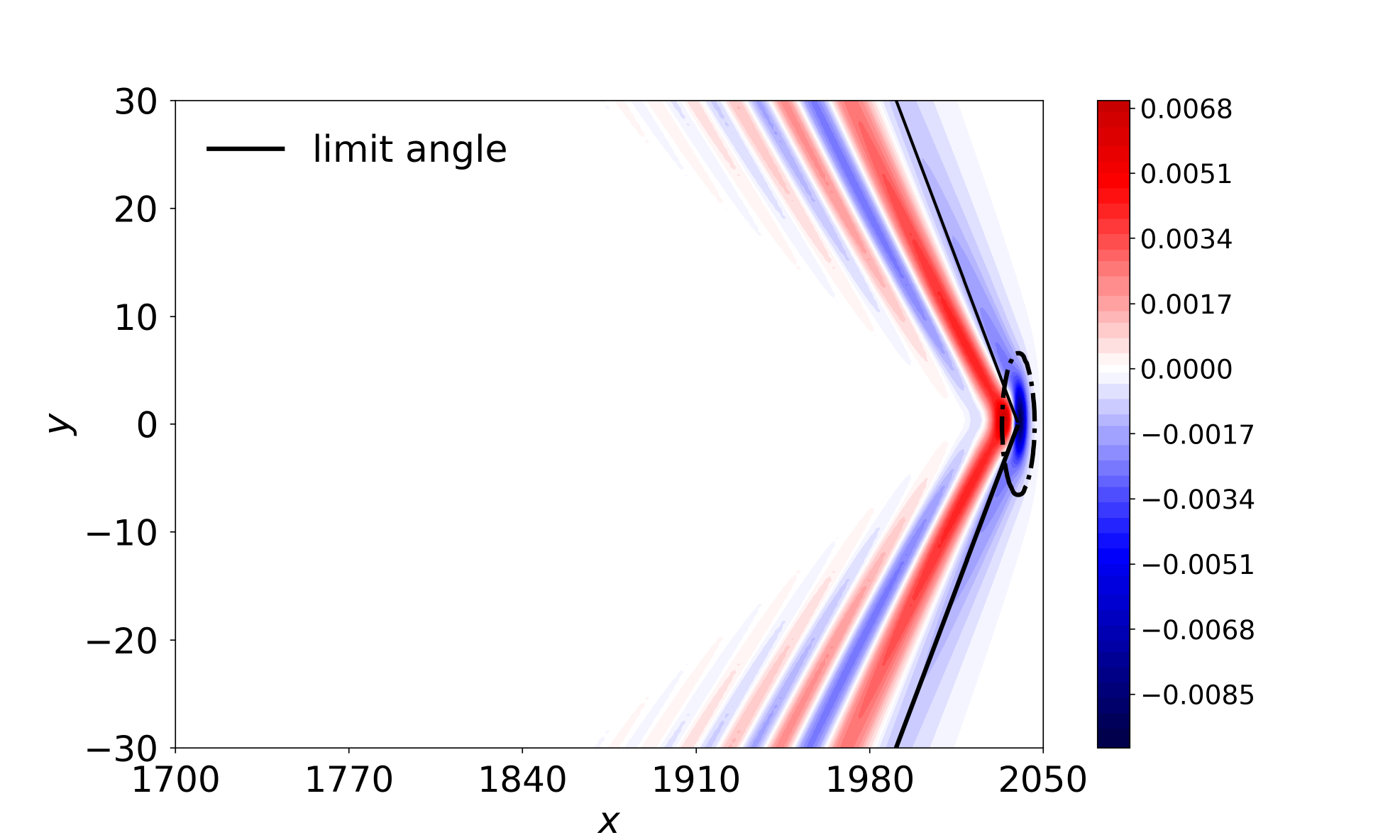}
    \end{minipage}
   \caption{Example~\ref{ex:2Dwaves}. Interface deformation  $\eta_{\varepsilon}^N$ at the final time  $t^N = 2500~\si{s}$ in the supercritical regime: the two-dimensional interface is shown on the left, while a zoomed view around the boat is displayed on the right, where the black line indicates the limiting angle $\varphi^{\star}$ defined in equation~\eqref{varphi_star}.}
    \label{fig:cas_super_crit_2dmu0est0}
\end{figure}
\FloatBarrier

\vspace{0.5em}
A similar simulation is carried out in the supercritical regime, as illustrated in Figure~\ref{fig:cas_super_crit_2dmu0est0}, which shows the interface deformation at the final time.
The two-dimensional view (left) highlights a wake persisting behind the ship.

In contrast with the subcritical case, this wake extends over a long distance downstream.
The zoomed-in view (right) shows that the amplitude of the deformation is weaker than in the subcritical regime. As described in section~\ref{subsec:2D_theory}, there are only divergent waves and no transverse waves. Moreover, the wake is contained within a cone centered on the ship that form an angle $\varphi^{\star} \approx 0.547~\si{rd}$ (in radian, obtained with~\eqref{varphi_star}) with the $\bm{e_x}-$axis. This cone is delimited by black straight lines in Figure~\ref{fig:cas_super_crit_2dmu0est0}.

\section{Conclusion and Perspectives}
In conclusion, this paper analyzes and models the phenomenon of dead water in a bi-layer environment, highlighting the formation of internal waves caused by the passage of a ship. One- and two-dimensional simulations, based on the spectral method and the exponential integrator, were conducted to evaluate the accuracy and stability of the obtained solutions. The parameter $\varepsilon$ plays a key role in controlling numerical oscillations, with its adjustment in the subcritical and supercritical regimes leading to a more stable and regular surface. One-dimensional simulations have shown that the phenomenon is more pronounced at low velocity, leading to an over-elevation under the ship's hull and the propagation of waves at the front and back of the ship. The existence of a critical navigation speed has also been demonstrated, influencing wave dynamics at the interface. In two dimensions, we deduce that the structure of the wake depends on the nature of the regime. In the subcritical regime, the waves have a marked amplitude but remain confined to a short distance. In contrast, in the supercritical regime, they have a lower amplitude but persist over a long distance downstream. Finally, the results obtained are consistent with theoretical predictions, confirming the accuracy of the methods used. 

This study provides several perspectives, for instance a more advanced analysis of the parameters influencing the oscillations, such as a traction force, to better understand their impact on internal waves. In addition, optimizing the boat's speed could improve its performance and limit the effect of dead water.

\appendix
\section{Computation of an optimal regularization parameter}
\label{app:exponentialintegrator}
In this paper, an exponential integrator~\eqref{eq:ERK1_WW} is used to solve~\eqref{eq:WaterWaves_viscous}, offering good accuracy with the advantage of allowing larger time steps than explicit methods. The parameter $\varepsilon$ corresponds to an artificial damping that ensures existence of a solution as seen in Theorem~\ref{thm:wellposed}.  It dissipates parasitic modes coming from Fourier truncation. In this section, we detail our strategy to compute $\varepsilon$ as small as possible so as to minimize parasitic oscillations without deteriorating the physical phenomenons.

Lets define $\Moperator(\varepsilon) = \begin{vmatrix} \eta_{\varepsilon}(x_{\max}) - \eta_{\varepsilon}(x_{\min}) \end{vmatrix}$ with $x_{\max} = \arg\max \eta_{\kfourier,\varepsilon}(x, t^N)$ and $x_{\min} = \arg\min \eta_{\kfourier,\varepsilon}(x, t^N)$ are the positions of the first detected maximum and minimum oscillations in the interval in front of the boat and at final time $t^N$. The parameter $\varepsilon$ is optimized to reduce the oscillation's measurement $\Moperator(\varepsilon)$. We define a tolerance $\delta>0$ and we look for a parameter $\varepsilon^*$ such that: 
\begin{equation*}
    \varepsilon^* = \min \left\lbrace \varepsilon  \geq 0 \mid \Moperator(\varepsilon) < \delta \right\rbrace.
\end{equation*}
In practice $\varepsilon^*$ is computed by induction on $n \in \mathbb{N}$. Starting with $\varepsilon_n=\varepsilon_0$, we have two possibilities:
\begin{itemize}
    \item if $M(\varepsilon_n) < \delta$, the optimal value is reached and we define $\varepsilon^* = \varepsilon_n$,
    \item else, compute $\varepsilon_{n+1} = \gamma \cdot \varepsilon_{n}$ and repeat the process with the new iterate. Here, $\gamma$ represents a factor allowing for progressive adjustments, typically  set to $\gamma = 1.1$.
\end{itemize}
This process continues until the convergence condition is satisfied, indicating that the oscillations have been sufficiently reduced. Given a final simulation time $t^N > 0$ and physical parameters, the global algorithm to compute $\varepsilon$ is outlined in Algorithm~\ref{alg:varepsilon}.

\begin{algorithm}[ht]
\caption{Computation of the damping parameter $\varepsilon$.}
\label{alg:varepsilon}
\begin{algorithmic}[1]
    \REQUIRE $\varepsilon_0$, $\delta = 10^{-7}$, $\gamma=1.1$.
    \FOR{$n = 0, \ldots,$ until convergence}
        \STATE Compute $\mudiff_{\kfourier,\varepsilon_n}(t^N)$ using~\eqref{eq:ERK1_WW};
        \STATE Compute the interface at final time $t^N$ using~\eqref{eq:eta_varphi} and the inverse Fourier transform    $$\eta_{\varepsilon_n}^N =  \mathcal{F}^{-1} \left( \dfrac{\mudiff_{\kfourier,\varepsilon_n}+\overline{\mudiff_{-\kfourier,\varepsilon_n}}}{2} \right);$$
        \STATE Compute $\Moperator(\varepsilon_n) = \begin{vmatrix} \eta_{\varepsilon_n}^N(x_{\text{max}}) - \eta_{\varepsilon_n}^N(x_{\text{min}}) \end{vmatrix}$;
        \IF{$\Moperator(\varepsilon_n) < \delta$}
            \STATE $\varepsilon^* = \varepsilon_n$;
            \STATE \textbf{break}
        \ELSE
            \STATE $\varepsilon_{n+1} = \gamma \cdot \varepsilon_n$.
        \ENDIF
    \ENDFOR
\end{algorithmic}
\end{algorithm}

\section*{Acknowledgements}
The authors are thankful to Julien Dambrine for helpful discussions. 

This work pertains (namely is not funded but enters in the corresponding scientific perimeter) to the French government program ``Investissements d'Avenir'' (LABEX INTERACTIFS, reference ANR-11-LABX-0017-01 and EUR INTREE, reference ANR-18-EURE-001). 

\bibliographystyle{abbrv}
\bibliography{References.bib}

\end{document}